\newcommand{\no}[1]{#1}
\renewcommand{\no}[1]{}
\renewcommand{\Delta}{\upDelta}}
\newcommand{\mdiv}{{\mathrm{div}}}
\date{\today}
\newtheorem{theorem}{Theorem}[section]
\newtheorem{proposition}{Proposition}[section]
\newtheorem{lemma}{Lemma}[section]
\newtheorem{corollary}{Corollary}[section]
\theoremstyle{remark}
\newtheorem{remark}{Remark}[section]
\numberwithin{equation}{section}
\newlength{\textlarg}
\title[An inverse anisotropic conductivity problem]{An inverse anisotropic conductivity problem induced by twisting a homogeneous cylindrical domain}
\author[Mourad Choulli]{Mourad Choulli\dag}
\address{\dag Institut \'Elie Cartan de Lorraine, UMR CNRS 7502, Universit\'e de Lorraine, Ile du Saulcy, F-57045 Metz cedex 1, France}
\email{mourad.choulli@univ-lorraine.fr}
\author[Eric Soccorsi]{Eric Soccorsi\ddag}
\address{\ddag Aix-Marseille Universit\'e, CNRS, CPT UMR 7332, 13288 Marseille, France \& Universit\'e de Toulon, CNRS, CPT UMR 7332, 83957 La Garde, France}
\email{eric.soccorsi@univ-amu.fr}
\date{}
\begin{document}

\begin{abstract}
We consider the inverse problem of determining the unknown function $\alpha: \mathbb{R} \rightarrow \mathbb{R}$ from the DN map associated with the operator $\mbox{div}(A(x',\alpha (x_3))\nabla \cdot)$ acting in the infinite straight cylindrical waveguide $\Omega =\omega \times \mathbb{R}$, where $\omega$ is a bounded domain of $\mathbb{R}^2$. Here $A=(A_{ij}(x))$, $x=(x',x_3) \in \Omega$, is a matrix-valued metric on $\Omega$ obtained by straightening a twisted waveguide. This inverse anisotropic conductivity problem remains generally open, unless the unknown function $\alpha$ is assumed to be constant. In this case we prove Lipschitz stability in the determination of $\alpha$ from the corresponding DN map. The same result remains valid upon substituting a suitable approximation of the DN map, provided the function $\alpha$ is sufficiently close to some {\it a priori} fixed constant.

\medskip
\noindent {\bf Key words:} Dirichlet Laplacian, twisted infinite cylindrical domain,  inverse anisotropic conductivity problem, DN map, stability estimate.

\medskip
\noindent
{\bf AMS subject classifications:} 35R30.
\end{abstract}

\maketitle

\tableofcontents

%=========================================================

\section{Introduction}
In the present paper we consider an inverse conductivity problem in an anisotropic medium arising from the twisting of a homogeneous infinite straight cylindrical waveguide.
Generically the inverse conductivity problem in an arbitrary domain $\Omega \subset \mathbb{R}^n$, $n \geq 1$, is to determine a symmetric, positive definite matrix $A=A(x)$, $x \in \Omega$, representing the conductivity tensor of $\Omega$, from the Dirichlet-to-Neumann (abbreviated to DN in the following) map associated with $A$:
$$ \Lambda_A: u_{| \partial \Omega} \mapsto A \nabla u \cdot \nu. $$
Here $u$ is the solution to the elliptic equation $\mbox{div} (A \nabla u) =0$ in $\Omega$ and $\nu$ denotes the unit outward normal vector to the boundary $\partial \Omega$ of $\Omega$.

\smallskip
If $A$ is isotropic (i.e. $A$ is the identity matrix of $\mathbb{R}^n$ up to some multiplicative scalar -unknown- function of $x \in \Omega$) Sylvester and Uhlmann \cite{SyU} proved that the conductivity is uniquenely determined from the knowledge of $\Lambda_A$. Similar results were derived by Lionheart in \cite{L} upon substituting a suitable given matrix $A_0(x)$ for the identity. For a conductivity of the form $A(x)=A_0(x,\alpha(x))$ where $A_0$ is given and $\alpha$ is an unknown scalar function, Alessandrini and Gaburro \cite{AG1,AG2} obtained uniqueness and stability under the monotonicity assumption 
\begin{equation}
\label{monotone}
\partial_t A_0(x,t) \geq c I,\ x \in \Omega,\ t \in \mathbb{R}.
\end{equation}
Here $c$ is some positive constant and $I$ stands for the identity of $\mathbb{R}^n$. The case $A(x)=A_0(\alpha(x))$ was treated earlier by Alessandrini in \cite{A} under the same kind of monotonicity condition. 

\smallskip
All the above mentioned results were obtained in a bounded domain of $\mathbb{R}^n$. In this paper we will rather consider an infinite cylindrical straight waveguide $\Omega = \omega \times \mathbb{R}$, where $\omega$ is a bounded domain of $\mathbb{R}^2$. To the variable twisting angle $\theta \in C^1(\mathbb{R})$ we then associate the twisted waveguide
\[
\Omega _\theta =\left\{ (R_{\theta (x_3)}x',x_3);\; x'=(x_1,x_2)\in \omega ,\; x_3\in \mathbb{R}\right\},
\]
where $R_\xi$ denotes the rotation in $\mathbb{R}^2$ of angle $\xi\in \mathbb{R}$. Twisted waveguides modeled by $\Omega_\theta$ exhibit interesting propagation properties such as the occurence of propagating modes with phase velocities slower than
those of similar modes in a straight waveguide. This explains why these peculiar structures are at the center of the attention of many theoretical and applied physicists (see e.g. \cite{Kang,DMR,KF,NZG,Sh,Wi,YM}). From a mathematical viewpoint twisted waveguides are the source of challenging spectral and PDE problems, some of them having been extensively studied in the mathematical literature (see e.g. \cite{BKRS,EKK,KK,KS,KZ1,KZ2}). Moreover the cylindrical geometry is well suited to the construction of complex geometrical optics solutions \cite{DKSU,CKS}, which motivates for the analysis of inverse problems occuring in waveguides.

\smallskip 
We consider the following boundary value problem (abbreviated to BVP) for the Laplacian in $\Omega_\theta$:
\begin{equation}\label{1}
\left\{
\begin{array}{ll}
\Delta v(y)=0,\ y \in \Omega_\theta,
\\
v(y)=g(y),\ y \in \partial \Omega_\theta.
\end{array}
\right.
\end{equation}
Upon straightening $\Omega_\theta$, \eqref{1} may be brought into an equivalent BVP stated in $\Omega$. This can be seen by introducing
\[
T_\xi =
\begin{pmatrix}
R_\xi &0
\\
0&1
\end{pmatrix},
\]
putting $u(x)=v(T_{\theta (x_3)}(x',x_3))$, $x=(x',x_3)\in \Omega$, and performing the change of variable $y=T_{\theta (x_3)}(x',x_3)$. We get using direct calculation that $u$ is solution to the following elliptic BVP in the divergence form 
 \begin{equation}\label{2}
\left\{
\begin{array}{ll}
\mdiv \big(A_0(x',\alpha (x_3))\nabla u\big)=0,\ x \in \Omega,
\\
u(x)=f(x),\ x \in \partial \Omega,
\end{array}
\right.
\end{equation}
where $\alpha =\theta'$, $f(x)=g(T_{\theta (x_3)}(x',x_3))$ for $x\in \partial \Omega$, and the matrix $A_0$ is given by
\[
 A_0(x',t)=
 \begin{pmatrix}
1+x_2^2t^2 & -x_2x_1t^2 &-x_2t
\\
-x_2x_1t^2 & 1+x_1^2t^2 & x_1t
\\
-x_2t & x_1t &1
 \end{pmatrix},\;\; x'\in \omega ,\; t\in \mathbb{R}.
\]
At this point it is worth stressing out that the BVP \eqref{2} is stated on the straight waveguide $\Omega$ and not on the twisted waveguide $\Omega_\theta$ itself. Nevertheless we notice that the geometry of $\Omega_\theta$ is expressed in \eqref{2} through the metric $A=(A_{ij}(x))=A_0(x',\alpha(x_3))$.

\smallskip
The problem we examine in this paper is to know whether the unknown function $\alpha$ can be determined from the DN map
\[
\Lambda_\alpha : f \mapsto A\nabla u \cdot \nu .
\]
This is the same kind of inverse anisotropic conductivity problem, but stated here in the unbounded straight waveguide $\Omega$, as the one studied in \cite{A,AG1,AG2,GL} in a bounded domain. However, it turns out that the usual monotonicity assumption (\ref{monotone}) on the conductivity, which is essential to the identification of $A$ from the DN map in this approach, is not verified in our framework (since the matrix $\partial_t A_0(x',t)$ has a negative eigenvalue $|x'|^2t-\sqrt{|x'|^4t^2+|x'|^2}$, see \eqref{ev}). This explains why the inverse problem associated with \eqref{2} is still open for general unknown functions $\alpha \in C({\mathbb R})$. 
However, if $\alpha$ is sufficiently close to some {\it a priori} fixed constant we prove in Theorem \ref{thm-aconstant} that this unknown function may well be determined from the knowledge of some suitable approximation of the DN map (see \eqref{eq-approx}).
Moreover, in the particular case where $\alpha$ is known to be constant so the conductivity matrix $A$ is invariant w.r.t. the infinite variable $x_3$, the original problem 
is equivalent to some inverse anisotropic conductivity problem stated in $\omega$. The corresponding conductivity matrix satisfies a weak monotonicity condition in this case implying Lipschitz stability in the determination of $\alpha$ from $\Lambda_\alpha$ (see Theorem \ref{theorem2}).

The problem under investigation in this paper is a special case of the anisotropic Calder\'on problem, which consists in determining the geometric structure of a smooth Riemanniann manifold with boundary from the Cauchy data of harmonic functions. 
Actually, the DN mapping being invariant under diffeomorphisms which preserve the boundary, it is well known that there is an obstruction to uniqueness in this problem. Nevertheless, in dimensions greater than or equal to three, it was proved by Lee and Uhlmann \cite{LeU} that compact real-analytic manifolds are uniquely defined by the DN map up to diffeomorphisms preserving the boundary. A similar result was obtained in \cite{GS} by Guillarmou and S\`a Barreto for Einstein metrics that are real-analytic in the interior.
The case of non-analytic simple manifolds was treated in \cite{DKSU} by means of Carleman estimates with limiting weights. Recently, Dos Santos Ferreira, Kurylev, Lassas and Salo proved in \cite{DKLS} that the boundary measurements in an infinite cylinder uniquely determine the transversal metric. 
We point out that in dimension two, there is an additional obstruction to uniqueness for the anisotropic Calder\'on problem, arising from the conformal invariance of the associated Laplace-Beltrami operator. 
%The inverse conductivity problem in that case is related to electrical impedance tomography. 
See \cite{SU, S, LU} and references therein for a detailed study of this case, and \cite{ALP} for a specific treatment of the same inverse problem in the plane.
Finally, we refer to \cite{IUY1, IUY2} for inverse anisotropic conductivity problems with partial Cauchy data.

The paper is organized as follows. Section 2 gathers the definition and the main properties of $\Lambda_\alpha$. In the first part of section 3 we adapt the method developed by Alessandrini and Gaburro in \cite{AG1, AG2} to determine the conductivity in a bounded anisotropic medium from the DN map, to the particular framework of an infinite cylindrical waveguide under consideration in this text. Although this technique does not allow for the identification of general unknown functions $\alpha$, we prove stability in the determination of $\alpha$ from some suitable approximation of the DN map, provided $\alpha$ is sufficiently close to some arbitrarily fixed constant. The case of constant unknown functions is examined in section 4 and we prove Lipschitz stability in the determination of $\alpha$ from the knowledge of $\Lambda_\alpha$ in this case. Finally, for the sake of completness, we gather several properties (which are not directly useful for the analysis of the inverse problem under study) of the DN map $\Lambda_\alpha$ in Appendix.

\medskip
\noindent
{\bf Acknowledgments.}
We are strongly indebted to the unknown referee of this paper for numerous remarks which have been helpful to us in improving this text.
We also want to thank G\"unther Uhlmann for valuable suggestions and comments during the completion of this work. 

%=========================================================

\section{The DN map $\Lambda_\alpha$}
 
\noindent {\bf Solution to the BVP \eqref{2}.}
As we are dealing with an infinitely extended domain $\Omega$, we start by defining the Sobolev spaces on $\partial \Omega$ required by our analysis. Let $s$ be either $1/2$ or $3/2$. Since the trace operator
\begin{align*}
\tau : C_0^\infty (\mathbb{R}, H^{s+1/2}(\omega ))&\longrightarrow L^2(\mathbb{R}, H^s(\partial \omega ))
\\
G&\mapsto \left[t\in \mathbb{R}\mapsto G(t,\cdot )_{|\partial \omega}\right]
\end{align*}
extends to a bounded operator, still denoted by $\tau$, from $L^2(\mathbb{R},H^{s+1/2}(\omega ))$ into $L^2(\mathbb{R}, H^s(\partial \omega ))$, we put
\begin{equation} \label{hs}
\widetilde{H}^{s}(\partial \Omega )=\left\{g\in L^2(\mathbb{R},H^{s}(\partial \omega ));\; \mbox{there exists}\; G\in H^{s+1/2}(\Omega )\; \mbox{such that}\; \tau G=g\right\}.
\end{equation}
The space $\widetilde{H}^{s}(\partial \Omega )$ is Hilbertian (e.g. \cite[p. 398]{Sc}) for the quotient norm 
\begin{equation} \label{hsn}
\|g\|_{\widetilde{H}^{s}(\partial \Omega )}=\min \left\{\|G\|_{H^{s+1/2}(\Omega )};\ G \in H^{s+1/2}(\Omega )\ \mbox{is such that}\ \tau G=g\right\}.
\end{equation}
For the sake of simplicity we write $G=g$ on $\partial \Omega$ instead of $\tau G=g$ in the sequel.

\begin{remark}
The definition in \cite[Chap.~1]{LM} of fractional Sobolev spaces $H^s(\partial \Omega )$, $s \in \mathbb{Q}$, on compact manifolds without boundary, may as well be adapted to the manifold $\partial \Omega = \partial \omega \times \mathbb{R}$ and it is not hard to check that the two spaces $H^s(\partial \Omega )$ and $\widetilde{H}^{s}(\partial \Omega )$ then coincide both algebraically and topologically in this case. 
\end{remark}

\begin{remark}
\label{rmk-min}
There is a minimizer to \eqref{hsn} for any arbitrary $g \in \widetilde{H}^{s}(\partial \Omega )$ with $s \in \{ 1 /2 , 3/2 \}$. Indeed, the mapping $\tau$ is continuous from $H^{s + 1 \slash 2}(\Omega)$ into $L^2(\mathbb{R},H^{s + 1 \slash 2}(\omega))$ so its kernel is closed in $H^{s+1 \slash 2}(\Omega)$. Then $\{ G \in H^{s+ 1 \slash 2}(\Omega),\ \tau G = g \}$ is a closed affine subset of $H^{s+1 \slash 2}(\Omega)$. This yields the existence of a minimizer to \eqref{hsn} since there is always an element of minimal norm in a closed convex subset of an Hilbert space.
\end{remark}

As a direct consequence of (\ref{hs})-(\ref{hsn}) and Remark \ref{rmk-min} we have the following extension lemma:
 
\begin{lemma}
\label{lemma1bis} 
Let $g\in \widetilde{H}^{s}(\partial \Omega )$ for $s \in \{ 1 /2 , 3/2 \}$. Then there exists $G\in H^{s+1/2}(\Omega )$ such that $G=g$ on $\partial \Omega$ and
 $$
 \|G\|_{H^{s+1/2}(\Omega )} = \|g\|_{\widetilde{H}^{s}(\partial \Omega )}.
 $$
 \end{lemma}

The main ingredient in the analysis of the BVP \eqref{2} is the uniform ellipticity of $A$, where $A$ denotes either $A(x',t)$ or $A_0(x',\alpha (x_3))$, as defined in section 1. Indeed, for all $\zeta \in \mathbb{R}^3$ we have
\begin{align*}
A(x',t)\zeta \cdot \zeta  &=\zeta _1^2+\zeta _2^2+\zeta_3^2-2tx_2\zeta _1\zeta _3+2tx_1\zeta _2\zeta _3+t^2(x_2\zeta_1 -x_1\zeta _2)^2
\\
&=\zeta _1^2+\zeta _2^2+(\zeta_3 +t(x_2\zeta_1 -x_1\zeta _2))^2,\ x'=(x_1,x_2) \in \omega,\, t \in {\mathbb R},
\end{align*}
by straightforward computations, which entails that $A(x',t)\zeta \cdot \zeta=0$ if and only if $\zeta =0$. Since $\overline{\omega} \times [\underline{t},\overline{t}]$ is compact for 
all real numbers $\underline{t}<\overline{t}$, there is thus $\lambda \geq 1$, depending on $\omega$, $\underline{t}$ and $\overline{t}$, such that we have
\begin{equation}\label{8}
\lambda ^{-1}|\zeta |^2\leq A(x',t)\zeta \cdot \zeta \leq\lambda |\zeta |^2\; \textrm{for all}\; x'\in \omega ,\; t\in [\underline{t},\overline{t}],\; \zeta \in \mathbb{R}^3.
\end{equation}
We turn now to studying the direct problem associated with \eqref{2}. We pick $f\in \widetilde{H}^{1/2}(\partial \Omega )$ and $F\in H^1(\Omega)$ such that $F=f$ on $\partial \Omega$.  In light of \eqref{8} and the Lax-Milgram lemma, there exists a unique $v\in H^1_0(\Omega)$ solving the variational problem
\begin{equation}\label{25}
\int_\Omega A\nabla v\cdot \nabla w dx =-\int_\Omega A\nabla F\cdot \nabla w dx,\ \textrm{for all}\ w\in H_0^1(\Omega ).
\end{equation}
Hence $u=v+F$ is the unique weak solution to the BVP \eqref{2}. That is, $u$ satisfies the first equation in \eqref{2} in the distributional sense and the second equation in the trace sense. By taking $w=v$ in \eqref{25}, we get $\|v\|_{H^1(\Omega )}\leq C \|F\|_{H^1(\Omega )}$ from \eqref{8} and Poincar\'e's inequality\footnote{Which holds true for $\Omega$ since $\omega$ is bounded.}{,} whence
  \[
 \|u\|_{H^1(\Omega )}\leq C\|F\|_{H^1(\Omega )},
 \]
where $C$ denotes some generic positive constant depending on $\omega$. Finally, by choosing $F \in H^1(\Omega)$ in accordance with Lemma \ref{lemma1bis} so that 
%$F=f$ in $\partial \Omega$ and 
$\| F \|_{H^1(\Omega)}= \| f \|_{\tilde{H}^{1 \slash 2}(\partial \Omega)}$, we find out that
 \begin{equation}\label{26}
 \|u\|_{H^1(\Omega )}\leq C\|f\|_{\widetilde{H}^{1/2}(\partial \Omega )}.
 \end{equation}
 
 %==============================================================================
 
\noindent {\bf Definition of the DN map.}
Let us first introduce the following $H(\textrm{div})$-type space,
 \[
 H(\mdiv_A,\Omega )=\{ P\in L^2(\Omega )^3;\; \mdiv (AP)\in L^2(\Omega )\},
 \]
and prove that $P\in C_0^\infty (\overline{\Omega})\mapsto AP \cdot \nu \in C^\infty (\partial \Omega )$ may be extended to a bounded operator from $H(\mdiv_A,\Omega )$ into the space $\widetilde{H}^{-1/2}(\partial \Omega )$ dual of $\widetilde{H}^{1/2}(\partial \Omega )$.

 \begin{proposition}\label{proposition1}
 Let $P\in H(\mdiv_A,\Omega )$. Then $AP\cdot \nu \in \widetilde{H}^{-1/2}(\partial \Omega )$ and
 we have
\begin{equation}\label{24}
 \|AP\cdot \nu \|_{\widetilde{H}^{-1/2}(\partial \Omega )}\leq C\big( \|P\|_{L^2(\Omega )}+\|\mdiv (AP)\|_{L^2(\Omega )}\big),
 \end{equation}
 for some constant $C=C(\omega,\theta)>0$.
In addition, the identity
\begin{equation}
\label{27}
\langle AP\cdot \nu ,g \rangle = \int_\Omega G \mdiv (AP) dx+\int_\Omega A\nabla G\cdot P dx,
\end{equation}
holds for $g\in \widetilde{H}^{1/2}(\partial \Omega )$ and $G\in H^1(\Omega)$ such that $G=g$ on $\partial \Omega$. Here $\langle \cdot ,\cdot \rangle$ denotes the duality pairing between $\widetilde{H}^{1/2}(\partial \Omega )$ and its dual $\widetilde{H}^{-1/2}(\partial \Omega )$.
 \end{proposition}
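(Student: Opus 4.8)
The plan is to establish Proposition \ref{proposition1} by the standard density/duality argument used to define normal traces in $H(\mdiv)$-type spaces, carefully adapted to the unbounded cylinder $\Omega$ and to the anisotropic weight $A$. First I would fix $P \in C_0^\infty(\overline{\Omega})^3$ and $g \in \widetilde{H}^{1/2}(\partial\Omega)$, pick an extension $G \in H^1(\Omega)$ with $G = g$ on $\partial\Omega$, and verify the Green-type identity \eqref{27} directly: integrating $\mdiv(AP)\, G$ over $\Omega$ by parts (legitimate here since $P$ is compactly supported, so no contribution from the "ends at infinity" arises) produces exactly $\langle AP\cdot\nu, g\rangle - \int_\Omega A\nabla G \cdot P\, dx$, using that $A$ is symmetric so that $AP \cdot \nabla G = A\nabla G \cdot P$. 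The boundary term is a genuine integral over $\partial\Omega$ at this smooth stage.

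Next I would use \eqref{27} to derive the bound \eqref{24} for smooth $P$. The right-hand side of \eqref{27} is estimated by $\|G\|_{H^1(\Omega)}\big(\|\mdiv(AP)\|_{L^2(\Omega)} + \lambda\|P\|_{L^2(\Omega)}\big)$ via Cauchy--Schwarz and the ellipticity bound \eqref{8} (with $\underline t = \inf \alpha$, $\overline t = \sup \alpha$, which are finite since $\theta \in C^1$ — this is where the dependence of $C$ on $\theta$ enters). Then I take the infimum over all admissible extensions $G$, which by the definition \eqref{hsn} of the quotient norm gives
\[
|\langle AP\cdot\nu, g\rangle| \leq C\,\|g\|_{\widetilde{H}^{1/2}(\partial\Omega)}\big(\|P\|_{L^2(\Omega)} + \|\mdiv(AP)\|_{L^2(\Omega)}\big).
\]
Taking the supremum over $g$ in the unit ball of $\widetilde{H}^{1/2}(\partial\Omega)$ yields \eqref{24} for $P \in C_0^\infty(\overline{\Omega})^3$, showing that $P \mapsto AP\cdot\nu$ is bounded from a dense subspace of $H(\mdiv_A,\Omega)$ into $\widetilde{H}^{-1/2}(\partial\Omega)$.

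Finally I would pass to the limit by density: since $C_0^\infty(\overline{\Omega})^3$ is dense in $H(\mdiv_A,\Omega)$ for its graph norm $\|P\|_{L^2} + \|\mdiv(AP)\|_{L^2}$ (this uses uniform ellipticity of $A$ so that the graph norm is comparable to the one for the Euclidean $\mdiv$, reducing to the classical density statement, together with a cutoff argument in $x_3$ to handle the unboundedness of $\Omega$), the operator extends uniquely to a bounded operator on all of $H(\mdiv_A,\Omega)$ satisfying \eqref{24}, and identity \eqref{27} persists in the limit for fixed $g$ and $G$, since both sides are continuous in $P$ for the graph norm.

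The main obstacle I anticipate is not the algebra of the Green formula but the functional-analytic bookkeeping forced by the infinite direction: establishing that $C_0^\infty(\overline{\Omega})^3$ is genuinely dense in $H(\mdiv_A,\Omega)$ on an unbounded domain with boundary $\partial\omega\times\mathbb{R}$, and checking that the quotient-norm definition \eqref{hsn} of $\widetilde{H}^{-1/2}(\partial\Omega)$ interacts correctly with the duality pairing so that taking the infimum over extensions $G$ is the legitimate move. One must be slightly careful that the trace of a generic $G \in H^1(\Omega)$ on $\partial\Omega$ is only in $\widetilde{H}^{1/2}(\partial\Omega)$ (not better), but this is exactly the space for which \eqref{hsn} is designed, so the argument closes once the density statement is in hand.
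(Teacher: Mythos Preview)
Your proposal is correct and follows essentially the same route as the paper: Green's formula for compactly supported $P$, the estimate via the quotient norm of $\widetilde{H}^{1/2}(\partial\Omega)$ by infimizing over extensions $G$, and extension to all of $H(\mdiv_A,\Omega)$ by density of $C_0^\infty(\overline\Omega)^3$ (for which the paper simply refers to mimicking the proof of \cite{GR}[Theorem~2.4], in line with the cutoff-plus-mollification argument you sketch).
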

 
 \begin{proof}
We first consider the case of $P\in C_0^\infty (\overline{\Omega})^3$. Fix $g\in \widetilde{H}^{1/2}(\partial \Omega )$ and choose $G \in H^1(\Omega)$ such that $G=g$ on $\partial \Omega$. Since $P$ has a compact support, we have
 \begin{equation}\label{28}
 \int_\Omega G\mdiv (AP)dx=-\int_\Omega A\nabla G\cdot Pdx+\int_{\partial \Omega}gAP\cdot \nu d \sigma,
\end{equation}
by Green's formula, whence
 \[
 \Big|\int_{\partial \Omega}gAP\cdot \nu d\sigma \Big|\leq C\|G\|_{H^1(\Omega )}\big( \|P\|_{L^2(\Omega )}+\|\mdiv (AP)\|_{L^2(\Omega )}\big).
 \]
Taking the infimum over $\{ G\in H^1(\Omega),\ G=g\ \mathrm{on}\ \partial \Omega \}$ in the right hand side of above estimate, we find that \eqref{24} holds true for every $P \in C_0^\infty (\overline{\Omega})^3$.
 
Further, we pick $P \in H(\mdiv_A,\Omega )$. The set $C_0^\infty (\overline{\Omega})^3$ being dense in $H(\mdiv_A,\Omega )$, as can be seen by mimicking the proof of \cite[Theorem~2.4]{GR}, we may find a sequence $(P_k)_k$ in $C_0^\infty (\overline{\Omega})^3$ converging to $P$ in $H(\mdiv_A,\Omega )$. Moreover, $(AP_k\cdot \nu )_k$ being a Cauchy sequence in $\widetilde{H}^{-1/2}(\partial \Omega )$ by  \eqref{24}, then $(AP_k\cdot \nu )_k$ admits a limit, denoted by $AP\cdot \nu$, in $\widetilde{H}^{-1/2}(\partial \Omega )$. Now
\eqref{27} follows readily from this and \eqref{28}.
 \end{proof}
 
Let $u$ be the $H^1(\Omega)$-solution to \eqref{2}. Applying Proposition \ref{proposition1} to $P=\nabla u$, we deduce from \eqref{26} that 
 \[
 \Lambda _\alpha :f \mapsto A\nabla u\cdot \nu
 \]
 is well defined as a bounded operator from $\widetilde{H}^{1/2}(\partial \Omega )$ into $\widetilde{H}^{-1/2}(\partial \Omega )$. Moreover the following identity
 \begin{equation}\label{29}
 \langle \Lambda _\alpha f,g\rangle =\int_\Omega A\nabla u\cdot \nabla G dx,
 \end{equation}
holds for all $g\in \widetilde{H}^{1/2}(\partial \Omega )$ and $G\in H^1(\Omega)$ such that $G=g$ on $\partial \Omega$.
 
Further, by taking $G=v$ in \eqref{29}, where $v$ is the solution to \eqref{2} with $f$ replaced by $g$, we find out that
 \[
 \langle \Lambda _\alpha f,g\rangle =\int_\Omega A\nabla u\cdot \nabla v dx=\int_\Omega \nabla u\cdot A\nabla v dx,
 \]
entailing
 \begin{equation}\label{30}
 \langle \Lambda _\alpha f,g\rangle =\langle  f, \Lambda _\alpha g\rangle ,\; \textrm{for all}\; f,g\in \widetilde{H}^{1/2}(\partial \Omega ).
 \end{equation}
This proves that $\Lambda _\alpha ^\ast{ _{\big|\widetilde{H}^{1/2}(\partial \Omega )}}=\Lambda _\alpha$, where $\widetilde{H}^{1/2}(\partial \Omega )$ is identified with a subspace of its bidual space.

Finally, for $i=1,2$, we put $A_i=A(x',\alpha _i(x_3))$ and $\Lambda _i=\Lambda_{\alpha _i}$, and let $u_i\in H^1(\Omega )$ denote a weak solution to the equation
\[
\mdiv (A_i\nabla u_i)=0\; \textrm{in}\; \Omega .
\]
Applying \eqref{29} to $f=u_i{_{\big|\partial \Omega}}$ and $g=u_{3-i}{_{\big|\partial \Omega}}$, we get
\[ \langle \Lambda _1 u_1,u_2\rangle =\int_\Omega A_1\nabla u_1\cdot \nabla u_2 dx\ \mathrm{and}\
\langle \Lambda _2 u_2,u_1\rangle =\int_\Omega A_2\nabla u_2\cdot \nabla u_1 dx, \]
which yields
\begin{equation}\label{31}
\langle (\Lambda _1 -\Lambda _2)u_1,u_2\rangle =\int_\Omega (A_1-A_2)\nabla u_1\cdot \nabla u_2 dx,
\end{equation}
by \eqref{30}.

%=========================================================

\section{Can we determine the unknown function $\alpha$ from $\Lambda_\alpha$?}

In this section we examine the inverse problem of identifying $\alpha$ from the knowledge of $\Lambda_{\alpha}$.

\smallskip
\noindent {\bf Analysis of the problem for general unknown functions.} Let $\gamma$ be a nonempty open subset of $\partial \omega$. We put $\Gamma =\gamma \times (-2L,2L)$ for some fixed $L>0$, and define the functional space
\begin{equation}
\label{eq1a}
\widetilde{H}^{1/2}_\Gamma (\partial \Omega )=\{f\in \widetilde{H}^{1/2}(\partial \Omega );\; \textrm{supp} f\subset \Gamma \}.
\end{equation}
%and\footnote{We may choose $\mathcal{W}(\mathbb{R})=\{\theta \in \mathcal{D}'(\mathbb{R});\; \theta '\in W^{1,\infty}(\mathbb{R})\}$ as %well.}{}
%\begin{equation}
%\label{eq1w}
%\mathcal{W}(\mathbb{R})=\{\theta \in W^{2,\infty}_{loc}(\mathbb{R});\; \theta '\in W^{1,\infty}(\mathbb{R})\}.
%\end{equation}

For $\alpha_i\in W^{1,\infty}(\mathbb{R})$, $i=1,2$, put $M=\max_{i=1,2} \| \alpha_i \|_{W^{1,\infty}(\mathbb{R})}$. We note $A_i=A(x',\alpha_i(x_3))$ and $\Lambda_i=\Lambda_{\alpha _i}$. In light of \eqref{31}, we have
\begin{equation}\label{32}
\langle (\Lambda _1 -\Lambda _2)u_1,u_2\rangle =\int_\Omega (A_1-A_2)\nabla u_1\cdot \nabla u_2 dx,
\end{equation}
for any function $u_i \in H^1(\Omega)$ which is a weak solution to the equation
$$\mdiv (A_i\nabla u_i)=0\ \mbox{in}\ \Omega,\ i=1,2. $$ 
Putting $\Omega^L=\omega \times (-L,L)$ and assuming that
\begin{equation}
\label{eq1b}
\alpha _1(x_3)=\alpha _2(x_3),\ |x_3|>L,
\end{equation}
we may rewrite \eqref{32} as
\begin{equation}\label{33}
\langle (\Lambda _1 -\Lambda _2)u_1,u_2\rangle =\int_{\Omega^L}(A_1-A_2)\nabla u_1\cdot \nabla u_2 dx.
\end{equation}

The following analysis is essentially based on the method built in \cite{AG2} for bounded domains, that will be adapted to the case of the infinite waveguide $\Omega$ under consideration. To this purpose we set
$$ \Gamma _\rho =\{ x\in \Gamma ;\; \textrm{dist}(x,\partial \Gamma )>\rho \}\ \textrm{and}\
U_\rho=\{ x\in \mathbb{R}^3 ;\; \textrm{dist}(x,\Gamma _\rho )<\rho /4 \}, $$
for all $\rho \in (0, \rho _0]$, where $\rho _0$ is some characteristic constant depending
only on $\omega$ and $L$, which is defined in \cite{AG2}.
Upon eventually shortening $\rho_0$, we can assume without loss of generality that $\Gamma _0=\gamma _0\times [-L,L]\subset \Gamma _\rho$ for some $\gamma _0 \Subset \gamma$. In view of \cite{AG2}, we thus may find a Lipschitz domain $\Omega _\rho$ satisfying simultaneously:
\[
\Omega \subset \Omega _\rho,\; \Gamma _0\subset\partial \Omega  \cap \Omega _\rho \Subset \Gamma\;  \mathrm{and}\
\textrm{dist}(x,\partial \Omega _\rho)\geq \rho /2\; \textrm{for all}\; x\in U_\rho.
\]
Moreover we know from \cite[Section~3]{AG1} that there is a unitary $C^\infty$ vector field $\tilde{\nu}$, defined in some suitable neighborhood of $\partial \omega\times (-2L,2L)$, which is non tangential to $\partial \Omega$ and points to the exterior of $\Omega$.
%=====================================
For $x^0\in \overline{\Gamma _\rho}$ we pick $\tau_0>0$ sufficiently small so the point $z_\tau = x^0+\tau \tilde{\nu}(x^0)$ obeys $C \tau \leq \textrm{dist}(z_{\tau},\partial \Omega) \leq \tau$ for every
$\tau \in(0, \tau_0]$, by \cite[Lemma~3.3]{AG1}. 
Here $\tau_0$ and $C$ are two positive constants depending only on $\Omega$, and $\lambda$, $\underline{t}$ and $\overline{t}$ are the same as in \eqref{8}.

\smallskip
In light of \cite{HK}, the operator $\mdiv (A_i\nabla \cdot\, )$, $i=1,2$, has a Dirichlet Green function $G_i=G_i(x,y)$ on $\Omega _\rho$.
More specifically, $G_i(x)=G_i(x,z_\tau )$ is the solution to the BVP
\begin{equation}
\label{equation1}
\left\{
\begin{array}{ll}
\mdiv (A_i\nabla G_i)=-\delta (x-z_\tau )\;\; &\textrm{in}\;\; \mathscr{D}'(\Omega _\rho)
\\
G_i=0 &\textrm{on}\;\; \partial \Omega _\rho,
\end{array}
\right.\ i=1,2,
\end{equation}
and we will see in the coming lemma that the claim of \cite[Corollary~3.4]{AG2} remains essentially unchanged for the unbounded domain $\Omega _\rho$ arising in this framework.

\begin{lemma}\label{lemma1}
There are two constants $\tau _0=\tau _0(\Omega ,\rho) >0$ and $C=C(\Omega ,\lambda ,\underline{t},\overline{t})$ such that the restriction $G_i(.,z_\tau)$ to $\Omega$, $i=1,2$, belongs to $H^1(\Omega )$, and satisfies
\begin{equation}\label{equation2}
\|G_i(\cdot ,z_\tau )\|_{H^1(\Omega )}\leq C\tau ^{-1/2},\;\; 0<\tau \leq \tau _0.
\end{equation}
\end{lemma}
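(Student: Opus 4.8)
The plan is to transcribe the proof of \cite{AG2}[Corollary 3.4] to the unbounded waveguide $\Omega$, the only genuinely new point being the control of $G_i(\cdot,z_\tau)$ at infinity, which is available because $\Omega_\rho$ coincides with the straight cylinder $\omega\times\mathbb{R}$ outside a bounded set. Throughout, $C$ denotes a generic constant depending only on $\Omega$, $\lambda$, $\underline t$ and $\overline t$, and $\tau_0$ is shrunk finitely many times. First I would record the facts about $G_i$ furnished by \cite{HK}: for each $\varepsilon\in(0,\mathrm{dist}(z_\tau,\partial\Omega_\rho))$ one has $G_i(\cdot,z_\tau)\in H^1(\Omega_\rho\setminus B(z_\tau,\varepsilon))$; the identity
\[
\int_{\Omega_\rho}A_i\nabla G_i(\cdot,z_\tau)\cdot\nabla\varphi\,dx=0
\]
holds for every $\varphi\in H^1_0(\Omega_\rho)$ that vanishes near $z_\tau$; and, since $n=3$, the Gr\"uter--Widman bound $0\le G_i(x,z_\tau)\le C|x-z_\tau|^{-1}$ holds on $\Omega_\rho\setminus\{z_\tau\}$ with $C=C(\lambda)$. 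Next I would set $d=d(\tau):=\mathrm{dist}(z_\tau,\partial\Omega)$ and, after shrinking $\tau_0$, invoke \cite{AG1}[Lemma 3.3] to assume that $C\tau\le d\le\tau$, that $z_\tau\notin\overline\Omega$, and that $z_\tau\in\Omega_\rho$ with $\mathrm{dist}(z_\tau,\partial\Omega_\rho)\ge\rho/2>\tau$; then $\mathrm{dist}(z_\tau,\Omega)=d$, whence $\Omega\cap B(z_\tau,d)=\emptyset$.

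For the $L^2$ bound I would use the Gr\"uter--Widman bound together with $\mathrm{dist}(z_\tau,\Omega)=d$ and the boundedness of $\omega$. Splitting $\Omega$ into $\Omega\cap B(z_\tau,1)$ and $\Omega\setminus B(z_\tau,1)$, the first piece contributes $\int_{\Omega\cap B(z_\tau,1)}|x-z_\tau|^{-2}\,dx\le\int_{B(z_\tau,1)}|x-z_\tau|^{-2}\,dx=4\pi$, and the second contributes $\int_{\Omega\setminus B(z_\tau,1)}|x-z_\tau|^{-2}\,dx\le C$ because $\omega$ is bounded (finite cross-sectional integration, and $|x_3|^{-2}$-type decay in the longitudinal variable); hence $\|G_i(\cdot,z_\tau)\|_{L^2(\Omega)}\le C$ uniformly in $\tau$, and the same reasoning gives $\|G_i(\cdot,z_\tau)\|_{L^2(\Omega_\rho)}\le C$. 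For the gradient I would take a radial cutoff $\eta\in C^\infty(\mathbb{R}^3)$ with $\eta\equiv1$ on $\{|x-z_\tau|\ge d\}$, $\eta\equiv0$ on $\{|x-z_\tau|\le d/2\}$, $0\le\eta\le1$ and $|\nabla\eta|\le Cd^{-1}$, together with a longitudinal cutoff $\chi_R(x)=\chi(x_3/R)$, $\chi\equiv1$ on $[-1,1]$, $\mathrm{supp}\,\chi\subset[-2,2]$, $|\chi'|\le C$, and test the above identity against $\varphi=\eta^2\chi_R^2\,G_i(\cdot,z_\tau)$, which is admissible (it belongs to $H^1_0(\Omega_\rho)$, has compact support, and vanishes near $z_\tau$). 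Expanding $\nabla\varphi$, using the ellipticity \eqref{8} on the leading term and Young's inequality to absorb the two cross terms, I would reach
\[
\int_{\Omega_\rho}\eta^2\chi_R^2|\nabla G_i|^2\,dx\le C\int_{\Omega_\rho}G_i^2|\nabla\eta|^2\,dx+C\int_{\Omega_\rho}G_i^2|\nabla\chi_R|^2\,dx .
\]
On $\mathrm{supp}\,\nabla\eta\subset B(z_\tau,d)\setminus B(z_\tau,d/2)$ one has $G_i\le Cd^{-1}$ and $|\nabla\eta|\le Cd^{-1}$, so the first term is $\le Cd^{-2}\cdot d^{-2}\cdot|B(z_\tau,d)|\le Cd^{-1}$; the second term is $\le CR^{-2}\|G_i(\cdot,z_\tau)\|_{L^2(\Omega_\rho)}^2\le CR^{-2}$. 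Letting $R\to\infty$ by monotone convergence, and using that $\eta\equiv1$ on $\Omega$ (as $\Omega\cap B(z_\tau,d)=\emptyset$), I would get
\[
\int_\Omega|\nabla G_i(\cdot,z_\tau)|^2\,dx=\int_\Omega\eta^2|\nabla G_i|^2\,dx\le Cd^{-1}\le C\tau^{-1},
\]
which, combined with the $L^2$ bound and $\tau\le\tau_0\le1$, gives \eqref{equation2}.

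The step I expect to be delicate is precisely the behavior at infinity: $G_i(\cdot,z_\tau)$ is a priori only a Green function of locally finite energy on the unbounded domain $\Omega_\rho$, so one must make sure it is globally of finite energy away from its pole and lies in $L^2(\Omega_\rho)$ before the absorption argument and the passage $R\to\infty$ become legitimate. This is exactly where the longitudinal cutoff $\chi_R$ and the fact that $\Omega_\rho$ has bounded cross-section outside a compact set are used, in place of the automatic compactness available in the bounded setting of \cite{AG2}; the remainder --- the Caccioppoli-type estimate, the use of \eqref{8}, and the Gr\"uter--Widman pointwise bound --- is a routine adaptation.
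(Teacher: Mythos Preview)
Your argument is correct, but it takes a different route from the paper. The paper quotes a single estimate from \cite{HK} (their formula (4.45)) to obtain directly
\[
\|G_i(\cdot ,z_\tau)\|_{Y^{2,1}(\Omega_\rho\setminus B_{\tau/3}(z_\tau))}\le C\tau^{-1/2},
\qquad Y^{2,1}:=\{u\in L^6:\ \nabla u\in L^2\},
\]
and then passes to the $H^1(\Omega)$-norm by one application of the Poincar\'e inequality available on the straight cylinder (using that $G_i$ vanishes on $\partial\Omega_\rho$, hence on all of $\partial\Omega$ outside the bounded bump). You instead rebuild the gradient estimate by a Caccioppoli argument with the two cutoffs $\eta$ and $\chi_R$, and obtain the $L^2$ bound from the Gr\"uter--Widman pointwise control by explicit integration on a domain with bounded cross-section. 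Your approach has the merit of being self-contained and of making completely transparent where the unboundedness of $\Omega_\rho$ enters (the $\chi_R$ cutoff and the limit $R\to\infty$); the paper's approach is considerably shorter but leans on the reader to check that the Hofmann--Kim estimate applies to the unbounded $\Omega_\rho$ and to unpack the Poincar\'e step. Either way the conclusion is the same, and the constants depend on the same data.
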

\begin{proof}
Let the space $Y^{2,1}(\Omega )=\{u\in L^6(\Omega );\; \nabla u\in L^2(\Omega )^3\}$
be endowed with the norm $\| v \|_{Y^{2,1}(\Omega )}=\| v \|_{L^6(\Omega)} + \| \nabla v \|_{L^2(\Omega)^3}$ and
choose $\tau_0=\tau_0(\Omega,\rho)$ so small relative to $\rho$ that 
$\textrm{dist}(z_\tau,\partial \Omega_\rho)>\tau$ for all $\tau \in (0,\tau_0]$.
Since the ball $B_{\tau \slash 3}(z_\tau)$ centered at $z_\tau$ with radius $\tau \slash 3$, is embedded in $\Omega_\rho \setminus \overline{\Omega}$, we have
\begin{equation}
\label{equ3}
\|G_i(\cdot ,z_\tau )\|_{Y^{2,1}(\Omega )}\leq \| G_i(\cdot,z_{\tau}) \|_{Y^{2,1}(\Omega_\rho \setminus B_{\tau \slash 3}(z_\tau))} \leq C\tau ^{-1/2},\;\; 0<\tau \leq \tau _0,
\end{equation}
directly from \cite[Formula~(4.45)]{HK}. Here $C$ is some positive constant depending only on $\Omega$, $\lambda$, $\underline{t}$ and $\overline{t}$.

Finally, since the Poincar\'e inequality holds in $\Omega$ because $\omega$ is bounded, we may deduce (\ref{equation2}) from (\ref{equ3}).
\end{proof}

Further, as a Green function is a Levi function\footnote{Let $E=\sum_{1 \leq i,j \leq m} \alpha_{ij}(x) \partial ^2_{ij}$ be an elliptic operator acting in a subdomain of $\mathbb{R}^m$, $m\geq 3$, and let $\beta (x)=(\beta_{ij}(x))_{1 \leq i,j \leq m}$ be the inverse matrix to $\alpha (x)=(\alpha_{ij}(x))_{1 \leq i,j \leq m}$. Then $H(x,y)=|\mathbb{S}^{m-1}|^{-1} |\mbox{det}(\alpha (y))|^{-1/2} \left[\beta (y)(x-y)\cdot (x-y)\right]^{\frac{2-m}{2}}$ is solution to the equation $\sum_{1 \leq i, j \leq m} \alpha_{ij}(y) \partial_{ij}^2 H(\cdot,y)=0$, and is called a parametrix of $E$. With reference to \cite{M}, any function $L=L(x,y)$ which is smooth outside $x\neq y$ and satisfies $\partial ^\alpha (L-H)(x,y)=O(|x-y|^{2+\lambda -|\alpha |-m})$ for some $\lambda>0$ and any $\alpha \in \mathbb{N}^m$, $|\alpha |\leq 2$, is a Levi function.}, it behaves locally like a parametrix. 
Hence, by applying \cite[Formula~(8.4)]{M}, $G_i$ can be brought into the form
\begin{equation}
\label{e1}
G_i(x)=C(\textrm{det}\, (A_i(z_\tau )))^{-1/2}\big(A_i(z_\tau )^{-1}(x-z_\tau )\cdot (x-z_\tau )\big)^{-1/2}+R_i(x),\ i=1,2,
\end{equation}
where $A_i(z_\tau)$ is a shorthand for $A(z_\tau',\alpha_i((z_\tau)_3))$, $C>0$ is a constant, and the reminder $R_i$ obeys the conditions
\[
\exists (r_0,\alpha) \in \mathbb{R}_+^* \times (0,1),\ |R_i(x)|+|x-z_\tau|\, |\nabla R_i|\leq C|x-z_\tau|^{-1+\alpha},\
x \in \Omega _\rho, |x-z_\tau|\leq r_0.
\]
On the other hand it holds true for $i=1,2$, that $\mdiv (A_i\nabla G_i)=0$ in the weak sense in $\Omega$, so \eqref{33} entails
\[
\langle (\Lambda _1 -\Lambda _2)G_1,G_2\rangle =\int_{\Omega^L}(A_1-A_2)\nabla G_1\cdot \nabla G_2dx.
\]
Taking into account that $G_i{_{\big |\partial \Omega}}\in \widetilde{H}^{1/2}(\partial \Omega )$, this yields
\[
\left| \int_{\Omega^L}(A_1-A_2)\nabla G_1\cdot \nabla G_2dx \right| \leq \|\Lambda _1 ^\Gamma -\Lambda _2 ^\Gamma \| \| G_1\|_{\widetilde{H}^{1/2}(\partial \Omega )}  \|G_2 \|_{\widetilde{H}^{1/2}(\partial \Omega )},
\]
where $\Lambda_i ^\Gamma$ is the restriction of $\Lambda_i$ to the closed subspace $\widetilde{H}^{1/2}_\Gamma (\partial \Omega )$, and consequently
\begin{equation}\label{e2}
\left|  \int_{\Omega^L}(A_1-A_2)\nabla G_1\cdot \nabla G_2dx \right| \leq C \|\Lambda _1 ^\Gamma -\Lambda _2 ^\Gamma \| \| G_1\|_{H^1( \Omega )}  \|G _2\|_{H^1(\Omega )}.
\end{equation}

The next step involves picking $x^0 \in \Gamma_0$ such that $|\alpha_1(x_3^0)-\alpha_2(x_3^0)|=\|\alpha_1-\alpha _2\|_{L^\infty (-L,L)}.$
We may actually assume without loss of generality that $|\alpha_1(x_3^0)-\alpha_2(x_3^0)|=\alpha_1(x_3^0)-\alpha_2(x_3^0)$, hence
\begin{equation}
\label{xze}
\alpha_1(x_3^0)-\alpha_2(x_3^0)=\|\alpha_1-\alpha _2\|_{L^\infty (-L,L)}.
\end{equation}
Let us next decompose the left hand side of \eqref{e2} into the sum
\begin{equation}\label{eq4}
 \int_{\Omega ^L}(A_1-A_2)\nabla G_1\cdot \nabla G_2dx= \int_{\Omega^L\cap B(z_\tau ,\rho  )}(A_1-A_2)\nabla G_1\cdot \nabla G_2dx+ \int_{\Omega^L\setminus B(z_\tau ,\rho  )}(A_1-A_2)\nabla G_1\cdot \nabla G_2dx,
\end{equation}
where, as previously stated, $z_{\tau}=x^0+\tau \tilde{\nu}(x^0)$ for some $\tau \in \mathbb{R}_+^*$.
Recalling \eqref{e1} we see that the first term in the right hand side of \eqref{eq4} is lower bounded by
%\begin{eqnarray*}
%& & \int_{\Omega^L\cap B(z_\tau ,\rho  )}(A_1-A_2)\nabla G_1 \cdot \nabla G_2 dx \\
%& \geq & \int_{B(z_\tau ,\rho  ) \cap \Omega ^L} \frac{A_2(z_\tau)^{-1} \left( A_1(x)-A_2(x) \right) A_1(z_\tau)^{-1} (x-z_\tau ) \cdot (x-z_\tau )}{\left( A_1(z_\tau )^{-1} \cdot %(x-z_\tau ) \right)^{3/2} \left( A_2(z_\tau)^{-1} \cdot (x-z_\tau ) \right)^{3/2}}dx
%-C \int_{\Omega^L\cap B(z_\tau ,\rho  )}|x-z_\tau |^{-4+2\alpha}dx.
%\end{eqnarray*}
$$\int_{B(z_\tau ,\rho) \cap \Omega ^L} \frac{A_2(z_\tau)^{-1} \left( A_1(x)-A_2(x) \right) A_1(z_\tau)^{-1} (x-z_\tau ) \cdot (x-z_\tau )}{\left( A_1(z_\tau )^{-1} \cdot (x-z_\tau ) \right)^{3/2} \left( A_2(z_\tau)^{-1} \cdot (x-z_\tau ) \right)^{3/2}}dx
-c \int_{\Omega^L\cap B(z_\tau ,\rho)}|x-z_\tau |^{-4+2\alpha}dx, $$
where $c$ is a positive constant depending only on $M$ and $\omega$.
From this and the Lipschitz continuity of $x \mapsto A_i(x)$, $i=1,2$, then follows upon writing $A_i(x^0)$ instead of $A((x^0)',\alpha_i(x_3^0))$, that
\begin{eqnarray}
& & \int_{\Omega^L \cap B(z_\tau ,\rho)} (A_1-A_2) \nabla G_1 \cdot \nabla G_2 dx \nonumber \\
& \geq & \int_{B(z_\tau ,\rho)\cap \Omega ^L} \frac{\left( A_1(x^0)^{-1})-A_2(x^0)^{-1} \right)(x-z_\tau )\cdot (x-z_\tau )}{\left( A_1(z_\tau)^{-1} (x-z_\tau) \cdot (x-z_\tau) \right)^{3/2} \left( A_2(z_\tau)^{-1} (x-z_\tau) \cdot (x-z_\tau) \right)^{3/2}}dx \nonumber \\
& - & c \int_{\Omega^L\cap B(z_\tau ,\rho)} |x-z_\tau|^{-4} \left( |x-x^0|^{2 \alpha} + |x^0 - z_\tau |^{2 \alpha} \right)dx. \label{eq1}
\end{eqnarray}
On the other hand, since $\left| \int_{\Omega^L\setminus B(z_\tau ,\rho)}(A_1-A_2)\nabla G_1\cdot \nabla G_2dx \right|$ is majorized by, say, $c$, and
$$
\|\Lambda _1 ^\Gamma -\Lambda _2 ^\Gamma \| \| G_1\|_{H^1( \Omega )}  \|G _2\|_{H^1(\Omega )}\leq C \tau ^{-1}\|\Lambda _1 ^\Gamma -\Lambda _2 ^\Gamma \|,
$$
by Lemma \ref{lemma1}, we deduce from \eqref{eq4}-\eqref{eq1} that
\begin{eqnarray}
& & \int_{B(z_\tau ,\rho)\cap \Omega ^L} \frac{\left( A_1(x^0)^{-1}-A_2(x^0)^{-1} \right)(x-z_\tau )\cdot (x-z_\tau )}{\left( A_1(z_\tau)^{-1} (x-z_\tau) \cdot (x-z_\tau) \right)^{3/2} \left( A_2(z_\tau)^{-1} (x-z_\tau) \cdot (x-z_\tau) \right)^{3/2}}dx \nonumber \\
& \leq & C \left( \tau ^{-1+\alpha} +1+\tau ^{-1}\|\Lambda _1 ^\Gamma -\Lambda _2 ^\Gamma \|\right). \label{eq2}
\end{eqnarray}

The main ingredient in the analysis developed in \cite{AG2} is the ellipticity condition \cite[Formula~(2.5)]{AG2} imposed on $\partial _tA(x',t)$,
%===========================
%\textcolor{blue}{there is $\widetilde{\lambda}>0$ so that
%\begin{equation}\label{M}
%\widetilde{\lambda} |\zeta |^2\leq \partial _tA(x',t)\; \textrm{for all}\; x'\in \omega ,\; t\in [\underline{t},\overline{t}],\; \zeta \in \mathbb{R}^3,
%\end{equation}
%}
%===========================
implying
\begin{align}
&\int_{B(z_\tau ,\rho)\cap \Omega ^L} \frac{\left( A_1(x^0)^{-1}-A_2(x^0)^{-1} \right)(x-z_\tau )\cdot (x-z_\tau )}{\left( A_1(z_\tau)^{-1} (x-z_\tau) \cdot (x-z_\tau) \right)^{3/2} \left( A_2(z_\tau)^{-1} (x-z_\tau) \cdot (x-z_\tau) \right)^{3/2}}dx \label{e3}
\\
&\hskip 10cm \geq C_0 \tau^{-1} (\alpha_1(x_3^0)-\alpha_2(x_3^0) ),\nonumber
\end{align}
for some constant $C_0>0$. This, \eqref{xze} and \eqref{eq2} finally lead to the desired stability estimate:
$$
\|\alpha_1-\alpha _2\|_{L^\infty (-L,L)} \leq C\|\Lambda _1 ^\Gamma -\Lambda _2 ^\Gamma \|.
$$

Unfortunately, it turns out that \cite[Formula~(2.5)]{AG2} is not fulfilled by $\partial_t A(x',t)$ in this framework. This can be seen from the following explicit expression
\begin{equation}
\label{ev}
\lambda_1(x',t)=0,\ \lambda_2(x',t)= |x'|^2t-\sqrt{|x'|^4t^2+|x'|^2},\ \lambda_3(x',t)=|x'|^2t+\sqrt{|x'|^4t^2+|x'|^2},
\end{equation}
of the eigenvalues of $\partial_t A(x',t)$, showing that the spectrum of $\partial_t A(x',t)$ has a negative component
for $x' \in \partial \omega$. Moreover, due to the occurence of this negative eigenvalue, the weak monotonicity assumption \cite[Formula~(5.7)]{AG1} is not satisfied by the conductivity matrix under consideration either. Therefore, the approach developed in \cite{AG2} does not apply to the inverse problem of determining $\alpha$ from the knowledge of $\Lambda_\alpha$, which remains open in the general case. 

\smallskip
Nevertheless, we will see in section \ref{sec-constant} that this is not the case for constant unknown functions anymore. But, prior to examining this peculiar framework, we will now deduce from the above reasoning, upon substituting a suitable matrix $A^\bullet$ for $A$, that unknown functions $\alpha$ which are close to some {\it a priori} fixed constant value may well be identified from the associated DN map.\\

\noindent {\bf The case of unknown functions close to a constant value.} 
Put
\[
A^\bullet (x',t)=t
 \begin{pmatrix}
1+x_2^2 & -x_2x_1 &-x_2
\\
-x_2x_1 & 1+x_1^2 & x_1
\\
-x_2& x_1 &1
 \end{pmatrix},\ x'\in \omega ,\ t\in \mathbb{R},
 \]
so we have $A^\bullet (x',t)=tA^\bullet (x',1)$.

We denote by $\Lambda _\alpha ^\bullet$ the DN map $\Lambda_{\alpha}$ where $A^\bullet (x',\alpha (x_3))$ is substituted for
$A(x',\alpha (x_3))$. Then, by arguing as in the derivation of \eqref{31}, we obtain that
 \begin{equation}\label{51}
\langle (\Lambda _\alpha -\Lambda _\alpha ^\bullet )u,u^\bullet \rangle =\int_\Omega (A(x',\alpha (x_3) )-A^\bullet (x',\alpha (x_3))\nabla u\cdot \nabla u^\bullet dx,
\end{equation}
for all $u, u^{\bullet} \in H^1(\Omega )$ satisfying
$\mdiv (A(x',\alpha (x_3) )\nabla u)=\mdiv (A^\bullet (x',\alpha (x_3) )\nabla u^\bullet )=0$
 in the weak sense in $\Omega$. By direct calculation we notice from the definitions of $A$ and $A^\bullet$ that 
$$
A(x,t)-A^\bullet (x',t)=(1-t) \left( B_0 - t  B_1(x') \right),\ (x',t)\in \omega \times \mathbb{R}, 
$$
where $B_0$ stands for the identity matrix of $\mathbb{R}^3$ and 
$$
B_1(x')= \begin{pmatrix}
x_2^2 & -x_2x_1&0 \\
-x_2x_1 & x_1^2 & 0\\
0& 0 &0
\end{pmatrix},\ x' \in \omega.
$$
Assume that $\alpha \in L^{\infty}(\mathbb{R})$,  $\| \alpha  -1\|_{L^{\infty}(\mathbb{R})}\leq M$.
Given $f$ (resp., $g$) in $\tilde{H}^{1 \slash 2}(\partial \Omega)$, we call $u$ (resp., $u^\bullet$) the solution to \eqref{2} (resp., \eqref{2} where $(A^\bullet,g)$ is substituted to $(A,f)$). Applying \eqref{51}, we get that
$| \langle (\Lambda _\alpha -\Lambda _\alpha^\bullet )f, g \rangle | \leq c(M,\omega) \| \alpha - 1 \|_{\infty} \| u \|_{H^1(\Omega)} \| u^\bullet \|_{H^1(\Omega)}$, where $C=C(M,\omega)$ is some positive constant depending only on $M$ and $\omega$. Therefore we have
$| \langle (\Lambda _\alpha -\Lambda _\alpha ^\bullet )f, g \rangle | \leq C \| \alpha - 1 \|_{\infty}
\| f \|_{\tilde{H}^{1 \slash 2}(\partial \Omega)} \| g \|_{\tilde{H}^{1 \slash 2}(\partial \Omega)}$, by \eqref{26}, hence
\begin{equation}
\label{eq-approx}
\|\Lambda _\alpha -\Lambda ^\bullet _\alpha \|_{\mathscr{L}(\widetilde{H}^{1/2}(\partial \Omega ),\widetilde{H}^{-1/2}(\partial \Omega )) }\leq C\|\alpha -1\|_\infty,
\end{equation}
showing that $\Lambda_{\alpha}^{\bullet}$ is a suitable approximation of $\Lambda_{\alpha}$ provided $\alpha$ is sufficiently close to $1$.\footnote{Note that $1$ can be replaced by any constant $\mu \neq 0$ upon substituting $A^\bullet (\mu x',\frac{t}{\mu})$ for $A^\bullet (x',t)$ in the above reasoning, since $A(x,t)=A(\mu x',\frac{t}{\mu})$.}
%Let us now assume that $\|\theta '-1\|_\infty \leq \frac{1}{2}$

Actually, the main benefit of dealing with $\Lambda_{\alpha}^{\bullet}$ instead of $\Lambda_{\alpha}$  in the inverse problem of determining $\alpha$ from $\Lambda_{\alpha}$, boils down to the following identity
$$ \partial _tA^\bullet (x',t)=A^\bullet (x',1)=A(x',1),\ {\rm for\ all}\ x' \in \omega\ {\rm and}\ t \in {\mathbb R}, $$
ensuring that the ellipticity condition \cite[Formula~(2.5)]{AG2} required by the method presented in \cite{AG2}, is verified by $A^{\bullet}$. 
Now, in light of the reasoning developed in the first part of this section, which is borrowed from the proof of \cite[Theorem~2.2]{AG2}, we derive the:
%we are able to prove the following theorem, where $(\Lambda ^\bullet _\theta )^\Gamma$ is defined similarly to %$\Lambda_\theta ^\Gamma$.

\begin{theorem}
\label{thm-aconstant}
For $L>0$, $\underline{t}>0$ and $M>0$ fixed, let $\alpha_i \in W^{1,\infty}(\mathbb{R})$, $i=1,2$, obey \eqref{eq1b} and fulfill
$\alpha_i \geq \underline{t}$ and $\|\alpha_i\|_{W^{1,\infty }(\mathbb{R})} \leq M$.
Then there exists a constant $C>0$ depending only on $\omega$, $M$ and $L$, such that we have
\[
\| \alpha_1-\alpha_2\|_{L^\infty (\mathbb{R})}\leq C
\|(\Lambda ^\bullet _{\alpha _1})^\Gamma -(\Lambda ^\bullet _{\alpha _2})^\Gamma \|_{\mathscr{L}(\widetilde{H}^{1/2}(\partial \Omega ),\widetilde{H}^{-1/2}(\partial \Omega )) },
%\|\Lambda ^\bullet _{\theta _1} -\Lambda ^\bullet _{\theta _2} %\|_{\mathscr{L}(\widetilde{H}^{-1/2}(\partial \Omega ),\widetilde{H}^{1/2}(\partial \Omega )) }.
\]
where $(\Lambda ^\bullet _{\alpha _i})^\Gamma=(\Lambda ^\bullet _{\alpha _i})_{| \widetilde{H}^{1/2}_\Gamma (\partial \Omega )}$ for $i=1,2$, the space $\widetilde{H}^{1/2}_\Gamma (\partial \Omega )$ being the same as in \eqref{eq1a}.
\end{theorem}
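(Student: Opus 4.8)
The plan is to reduce Theorem \ref{thm-aconstant} to the situation covered by \cite{AG2}[Theorem 2.2], exploiting that the only obstruction identified in the general discussion above — the failure of the ellipticity condition \cite{AG2}[Formula (2.5)] for $\partial_t A$ — disappears when $A$ is replaced by $A^\bullet$. First I would record that the whole machinery of the first part of this section (the domain $\Omega_\rho$, the Green functions, Lemma \ref{lemma1}, the parametrix expansion \eqref{e1}, and the reduction of the left-hand side of \eqref{e2} to an explicit singular integral) goes through verbatim with $A^\bullet$ in place of $A$; indeed \eqref{8} holds for $A^\bullet$ on $\overline{\omega} \times [\underline{t},\overline{t}]$ whenever $0 \notin [\underline{t},\overline{t}]$, which is guaranteed since $\|\alpha_i\|_{W^{1,\infty}}\le M$ and $\alpha_i$ is close to the fixed constant $1$, so $\alpha_i(x_3) \in [\underline{t},\overline{t}] \subset (0,\infty)$ for suitable $\underline{t},\overline{t}$ depending on $M$.

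Next I would set up the quantitative comparison exactly as in \eqref{32}--\eqref{33}: with $A_i^\bullet = A^\bullet(x',\alpha_i(x_3))$ and $u_i = G_i(\cdot, z_\tau)$ the Green functions of $\mdiv(A_i^\bullet \nabla \cdot\,)$ on $\Omega_\rho$, identity \eqref{31} (with $A^\bullet$) combined with Lemma \ref{lemma1} gives
\[
\int_{\Omega^L} (A_1^\bullet - A_2^\bullet) \nabla G_1 \cdot \nabla G_2\, dx \le C \tau^{-1} \|(\Lambda^\bullet_{\alpha_1})^\Gamma - (\Lambda^\bullet_{\alpha_2})^\Gamma\|,
\]
where the right-hand side uses $\|G_i\|_{\widetilde{H}^{1/2}(\partial\Omega)} \le C\|G_i\|_{H^1(\Omega)} \le C\tau^{-1/2}$. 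Then I would pick $x^0 \in \Gamma_0$ realizing $\|\alpha_1-\alpha_2\|_{L^\infty(-L,L)}$, substitute the parametrix expansion \eqref{e1}, and isolate the leading singular term as in \eqref{e3}. The key point is that $\partial_t A^\bullet(x',t) = A^\bullet(x',1) = A(x',1)$ is positive definite uniformly on $\overline{\omega}$ (this is \eqref{8} for $A$ at $t=1$), so the argument of \cite{AG2} yielding the lower bound $\ge C\tau (t_0 - s_0)$ applies directly with $A^\bullet$. Combining the lower and upper bounds and dividing by $\tau$ gives $\|\alpha_1-\alpha_2\|_{L^\infty(-L,L)} \le C\|(\Lambda^\bullet_{\alpha_1})^\Gamma - (\Lambda^\bullet_{\alpha_2})^\Gamma\|$, and \eqref{eq1b} upgrades this to the $L^\infty(\mathbb{R})$ norm on the left.

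The step I expect to require the most care is verifying that the estimates borrowed from \cite{HK}, \cite{M}, \cite{AG1} and \cite{AG2} — all originally stated for bounded domains — transfer to the unbounded domain $\Omega_\rho$ with constants controlled only by $\omega$, $M$, $L$ (and not by any artificial truncation). This is precisely what Lemma \ref{lemma1} handles for the Green-function bound, and the remaining ingredients (the parametrix expansion \eqref{e1}, the non-tangential vector field $\tilde\nu$, the geometry of $z_\tau$) are local near $\Gamma_0 \subset \overline{\omega} \times [-L,L]$, hence insensitive to the infinite extent of $\Omega$; nonetheless I would state explicitly that the constants $\rho_0$, $\tau_0$, $C$ inherited from \cite{AG2} depend only on $\omega$, $L$ and on the ellipticity constant $\lambda$ of \eqref{8}, which in turn depends only on $\omega$ and $M$. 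A secondary point worth a sentence is that the constant in the approximation bound \eqref{eq-approx} and the closeness of $\alpha_i$ to $1$ only enter through fixing the interval $[\underline{t},\overline{t}]$; the stability constant $C$ in the theorem itself is uniform over all admissible $\alpha_i$.
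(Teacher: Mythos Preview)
Your proposal is correct and follows essentially the same route as the paper, which simply says ``in light of the reasoning developed in the first part of this section, arguing as in the proof of \cite{AG2}[Theorem~2.2]'' and states the theorem. You have unpacked that one-line reference accurately: rerun the Green-function/parametrix machinery with $A^\bullet$ in place of $A$, observe that $\partial_t A^\bullet(x',t)=A(x',1)$ satisfies the ellipticity condition \cite{AG2}[Formula~(2.5)], and invoke the \cite{AG2} lower bound \eqref{e3} to close the argument.

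One remark on your handling of the ellipticity of $A^\bullet$ itself: you write that $\alpha_i(x_3)\in[\underline t,\overline t]\subset(0,\infty)$ because $\alpha_i$ is close to $1$, but note that the theorem as stated imposes only $\|\alpha_i\|_{W^{1,\infty}}\le M$ and \eqref{eq1b}, not positivity. This is a hypothesis implicitly carried over from the surrounding discussion (``unknown functions close to a constant value'') rather than something the bound $\|\alpha_i\|_{W^{1,\infty}}\le M$ delivers on its own; you are right to flag it, and in a self-contained write-up it should be listed as an explicit assumption (e.g.\ $\alpha_i\ge c_0>0$) so that \eqref{8} holds for $A^\bullet$. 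Your final paragraph conflates this slightly with the role of \eqref{eq-approx}: the approximation estimate \eqref{eq-approx} is not used anywhere in the proof of the theorem, which concerns $\Lambda^\bullet$ alone; closeness to $1$ enters only through the positivity needed for ellipticity of $A^\bullet$.
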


%=========================================================
 
\section{The case of constant unknown functions}
\label{sec-constant}

In this section we address the case of affine twisting functions $\theta$, that is constant functions $\alpha$, by means of the partial Fourier transform
$\mathcal{F}_{x_3}$ with respect to the variable $x_3$. This is suggested by the translational invariance of the system under consideration in the infinite direction $x_3$, arising from the fact that the matrix $A(x',\alpha (x_3))$ appearing in \eqref{2} does not depend on $x_3$ in this peculiar case. 

\smallskip
In the sequel, we note $\xi$ the Fourier variable associated with $x_3$ and we write $\widehat{w}$ instead of ${\mathcal F}_{x_3} w$ for every function $w=w(x',x_3)$:
$$ \widehat{w}(x',\xi)=({\mathcal F}_{x_3} w)(x',\xi),\ x' \in \omega,\ \xi \in \mathbb{R}. $$

\smallskip
The first step of the method is to re-express the system \eqref{2} in the Fourier plane $\{ (x',\xi),\ x' \in \omega, \xi \in \mathbb{R} \}$.\\

\noindent {\bf Rewriting the BVP in the Fourier variables.}
We start with two useful technical lemmas.
\begin{lemma}
\label{lemma2}
For every $w\in H^1(\Omega)$ the identity $\widehat{\partial _{x_j}w}=\partial _{x_j}\widehat{w}$ holds for $j=1,2$.
\end{lemma}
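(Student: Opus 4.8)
The plan is to reduce the identity to a statement about distributional derivatives and then verify it by pairing against test functions. First I would recall that $\mathcal{F}_{x_3}$ acts as a unitary operator on $L^2(\Omega)=L^2(\omega\times\mathbb{R})$: applying Plancherel's theorem in the $x_3$ variable for almost every $x'\in\omega$ and integrating over $\omega$ by Fubini gives $\|\widehat{w}\|_{L^2(\Omega)}=\|w\|_{L^2(\Omega)}$. Hence, for $w\in H^1(\Omega)$, both $\widehat{w}$ and $\widehat{\partial_{x_j}w}$ belong to $L^2(\Omega)$, and it remains only to check that $\widehat{\partial_{x_j}w}$ coincides with the distributional derivative $\partial_{x_j}\widehat{w}$ in $\mathscr{D}'(\Omega)$, i.e. that $\langle \partial_{x_j}\widehat{w},\psi\rangle=\langle \widehat{\partial_{x_j}w},\psi\rangle$ for every $\psi\in C_0^\infty(\Omega)$.

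The computation would then rest on two elementary ingredients: (i) the multiplication formula $\int_\Omega \widehat{w}\,\Phi\,dx'\,d\xi=\int_\Omega w\,\widehat{\Phi}\,dx'\,dx_3$, valid for $\Phi\in L^2(\Omega)$ by Fubini together with the one--dimensional multiplication formula and a density argument; and (ii) the fact that, since $\psi$ is smooth with compact support and $j\in\{1,2\}$, differentiation under the integral sign gives $\widehat{\partial_{x_j}\psi}=\partial_{x_j}\widehat{\psi}$. Combining these with the defining property of the weak derivative of $w$ yields the chain
\[
\langle \partial_{x_j}\widehat{w},\psi\rangle=-\int_\Omega \widehat{w}\,\partial_{x_j}\psi\,dx'\,d\xi=-\int_\Omega w\,\widehat{\partial_{x_j}\psi}\,dx'\,dx_3=-\int_\Omega w\,\partial_{x_j}\widehat{\psi}\,dx'\,dx_3=\int_\Omega \partial_{x_j}w\,\widehat{\psi}\,dx'\,dx_3=\int_\Omega \widehat{\partial_{x_j}w}\,\psi\,dx'\,d\xi,
\]
which is the required identity.

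The only point needing care --- and the one I would flag as the main obstacle --- is the passage $-\int_\Omega w\,\partial_{x_j}\widehat{\psi}=\int_\Omega \partial_{x_j}w\,\widehat{\psi}$: although $\widehat{\psi}$ is smooth and compactly supported in $x'$, it is only Schwartz (not compactly supported) in $x_3$, so it is not itself an admissible test function in $C_0^\infty(\Omega)$ and one cannot invoke the definition of the weak derivative directly. I would remedy this by choosing a sequence $\chi_n\in C_0^\infty(\mathbb{R})$ with $\chi_n\to 1$ pointwise, $0\le\chi_n\le 1$ and $\|\chi_n'\|_\infty\le C/n$, so that $\chi_n(x_3)\widehat{\psi}\in C_0^\infty(\Omega)$; applying the definition of $\partial_{x_j}w$ to this test function, noting that $\chi_n$ depends only on $x_3$ and hence commutes with $\partial_{x_j}$ for $j=1,2$ (so no spurious term appears --- which is precisely where the restriction $j\neq 3$ enters, since $\widehat{\partial_{x_3}w}=i\xi\,\widehat{w}\neq\partial_{x_3}\widehat{w}$ in any case), and then letting $n\to\infty$ by dominated convergence using the rapid decay of $\widehat{\psi}$ and $\nabla\widehat{\psi}$ in $x_3$. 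Everything else --- the Plancherel identity, the multiplication formula, differentiation under the integral --- is routine. An alternative route, avoiding the cutoff, would be to first establish the identity on the dense subspace of $H^1(\Omega)$ spanned by finite sums $\phi(x')\eta(x_3)$ with $\phi\in C^\infty(\overline{\omega})$ and $\eta\in C_0^\infty(\mathbb{R})$, where the claim is immediate, and then pass to the limit using the boundedness of $\mathcal{F}_{x_3}$ on $L^2(\Omega)$.
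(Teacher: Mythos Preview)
Your argument is correct and follows essentially the same route as the paper's: both hinge on the self-adjointness of $\mathcal{F}_{x_3}$ (your multiplication formula), the definition of the weak $x'$-derivative, and the fact that $\partial_{x_j}$ for $j\in\{1,2\}$ commutes with the $x_3$-Fourier transform on smooth functions. The only organizational difference is that the paper tests from the outset against tensor products $\varphi(x')\widehat{\psi}(x_3)$ with $\varphi\in C_0^\infty(\omega)$ and $\psi\in\mathcal{S}(\mathbb{R})$, and invokes the fiber-wise integration-by-parts identity
\[
\int_\omega \partial_{x_j}w(x',x_3)\,\varphi(x')\,dx'=-\int_\omega w(x',x_3)\,\partial_{x_j}\varphi(x')\,dx'\quad\text{for a.e. }x_3,
\]
before integrating against $\widehat{\psi}$ in $x_3$ and then concluding by density of $C_0^\infty(\omega)$ in $L^2(\omega)$ and of $\mathcal{S}(\mathbb{R})$ in $L^2(\mathbb{R})$. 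This sidesteps the obstacle you flagged, since the integration by parts is purely in $x'$ (where $\varphi$ is compactly supported) and the lack of compact $x_3$-support of $\widehat{\psi}$ never enters. Your cutoff argument is a perfectly valid substitute, and your alternative density route via tensor products is in fact very close to what the paper actually does.
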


\begin{proof}
Fix $j=1,2$. For every $\varphi \in C_0^\infty (\omega )$ and $\psi \in \mathcal{S}(\mathbb{R})$, we have
\[
\int_\omega \varphi (x') \left( \int_\mathbb{R}\partial _{x_j}w(x',x_3)\widehat{\psi}(x_3)dx_3 \right) dx'=\int_\mathbb{R}\widehat{\psi}(x_3) \left( \int_\omega \partial _{x_j}w(x',x_3)\varphi (x')dx' \right) dx_3,
\]
from Fubini's theorem. By integrating by parts in the last integral, we obtain
\[
\int_\omega \partial _{x_j}w(x',x_3)\varphi (x')dx'=-\int_\omega w(x',x_3)\partial _{x_j}\varphi (x')dx',\; \textrm{a.e.}\; x_3\in \mathbb{R},
\]
so we get
\begin{eqnarray*}
\int_\omega \varphi (x') \left( \int_\mathbb{R}\partial _{x_j}w(x',x_3)\widehat{\psi}(x_3)dx_3 \right) dx'& = & -\int_\mathbb{R}\widehat{\psi}(x_3) \left( \int_\omega w(x',x_3)\partial _{x_j}\varphi (x')dx'\right) dx_3 \\
& = & 
- \int_\omega \partial _{x_j}\varphi (x') \left( \int_\mathbb{R}w(x',x_3)\widehat{\psi}(x_3)dx_3 \right) dx'.
\end{eqnarray*}
Further, the operator $\mathcal{F}_{x_3}$ being selfadjoint in $L^2(\mathbb{R})$, it is true that
\[
\int_\mathbb{R}w(x',x_3)\widehat{\psi}(x_3)dx_3=\int_\mathbb{R}\widehat{w}(x',\xi ) \psi (\xi )d\xi,\; \textrm{a.e.}\; x'\in \omega,
\]
whence
\begin{align*}
\int_\omega \varphi (x') \left( \int_\mathbb{R}\partial _{x_j}w(x',x_3)\widehat{\psi}(x_3)dx_3 \right) dx'&=-\int_\omega \partial _{x_j}\varphi (x') \left( \int_\mathbb{R}\widehat{w}(x',\xi ) \psi (\xi )d\xi \right) dx'\\ &= \int_\omega \varphi (x') \left( \int_\mathbb{R}\partial _{x_j}\widehat{w}(x',\xi ) \psi (\xi )d\xi \right) dx',
\end{align*}
by integrating by parts. From the density of $C_0^{\infty}(\omega)$ in $L^2(\omega)$, the above identity entails that
\[
\int_\mathbb{R}\partial _{x_j}w(x',x_3)\widehat{\psi}(x_3)dx_3=\int_\mathbb{R}\partial _{x_j}\widehat{w}(x',\xi ) \psi (\xi )d\xi,\; \textrm{a.e.}\; x'\in \omega,
\]
for every $\psi \in \mathcal{S}(\mathbb{R})$. From this, the selfadjointness of $\mathcal{F}_{x_3}$ and the density of $\mathcal{S}(\mathbb{R})$ in $L^2(\mathbb{R})$, then follows that
$\widehat{\partial _{x_j}w}=\partial _{x_j}\widehat{w}$.
\end{proof}

\begin{lemma}\label{lemma3}
Let $C=(C_{kl})_{1 \leq k,l \leq 3} \in W^{1,\infty}(\omega )^{3\times 3}$ be such that $(C_{kl}(x'))_{1 \leq k,l \leq 3}$ is symmetric for all $x'\in \omega$. Then every $w\in H^1(\Omega )$ obeying
\begin{equation}\label{34}
\int_\Omega C\nabla w\cdot \nabla v dx =0\ \textrm{for all}\ v\in H_0^1(\Omega ),
\end{equation}
satisfies the equation
\begin{equation}
\label{eqdiv}
-\mdiv_{x'} (\widetilde{C}(x')\nabla _{x'}\widehat{w})+P(x',\xi )\cdot \nabla _{x'}\widehat{w}+q(x',\xi )\widehat{w}=0\;\; \textrm{in}\; \mathcal{D}'(\Omega ),
\end{equation}
with
\begin{align*}
&\widetilde{C}(x')=(C_{ij}(x'))_{1\leq i,j \leq 2}
\\
&P(x',\xi )=-i2\xi
 \begin{pmatrix}
 C_{31}(x')  \\ C_{32}(x') 
 \end{pmatrix}
 \\
&q(x',\xi )=-i\xi \mdiv_{x'} \begin{pmatrix}
 C_{31}(x')  \\ C_{32}(x') 
 \end{pmatrix} +\xi ^2C_{33}(x'),\ (x',\xi) \in \Omega.
\end{align*}
Moreover, if $w\in H^2(\Omega )$ is solution to \eqref{34} then \eqref{eqdiv} holds for a.e. $(x',\xi) \in \Omega$.
%in addition
%\begin{equation}\label{40}
%\mdiv (\widetilde{C}(x')\nabla _{x'}\widehat{w}(x', \xi ))+P(x',\xi )\cdot \nabla _{x'}\widehat{w}(x', \xi )+q(x',\xi �)\widehat{w}(x', \xi )=0\;\; \textrm{a.e.}\; (x',\xi )\in \Omega .
%\end{equation}
\end{lemma}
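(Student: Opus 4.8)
The plan is to unfold the weak formulation \eqref{34} in the variable $x_3$ and then apply the partial Fourier transform $\mathcal{F}_{x_3}$, exploiting the fact that the coefficient matrix $C$ depends only on $x'$. Concretely, I would first take test functions $v$ of the special product form $v(x',x_3)=\varphi(x')\overline{\widehat\psi(x_3)}$ with $\varphi\in C_0^\infty(\omega)$ and $\psi\in\mathcal{S}(\mathbb{R})$; such functions are dense enough in $H_0^1(\Omega)$ to recover \eqref{34} in full. Writing $C\nabla w\cdot\nabla v$ in block form, separate the contributions of the tangential gradient $\nabla_{x'}$ and the longitudinal derivative $\partial_{x_3}$: the terms are $\widetilde C(x')\nabla_{x'}w\cdot\nabla_{x'}v$, the mixed terms involving $C_{31},C_{32}$ paired with one $\partial_{x_3}$ and one $\nabla_{x'}$, and the pure term $C_{33}(x')\partial_{x_3}w\,\partial_{x_3}v$.

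Next I would carry out the $x_3$-integration using Parseval/Plancherel and the self-adjointness of $\mathcal{F}_{x_3}$ on $L^2(\mathbb{R})$, exactly as in the proof of Lemma \ref{lemma2}, together with the key fact $\widehat{\partial_{x_3}w}=i\xi\,\widehat w$ (valid for $w\in H^1(\Omega)$ since $\partial_{x_3}w\in L^2$) and the already-established $\widehat{\partial_{x_j}w}=\partial_{x_j}\widehat w$ for $j=1,2$ from Lemma \ref{lemma2}. After transforming, each $\partial_{x_3}$ becomes multiplication by $i\xi$, and the integral over $x_3$ turns into an integral over $\xi$. This produces, for every $\varphi\in C_0^\infty(\omega)$ and every $\psi\in\mathcal{S}(\mathbb{R})$, the identity
\[
\int_\Omega\Big(\widetilde C\nabla_{x'}\widehat w\cdot\nabla_{x'}\varphi
+ i\xi\begin{pmatrix}C_{31}\\ C_{32}\end{pmatrix}\!\cdot\!\nabla_{x'}\varphi\,\widehat w
- i\xi\begin{pmatrix}C_{31}\\ C_{32}\end{pmatrix}\!\cdot\!\nabla_{x'}\widehat w\,\varphi
+\xi^2 C_{33}\widehat w\,\varphi\Big)\psi(\xi)\,dx'd\xi=0,
\]
where the sign in the third term comes from integrating the mixed $\partial_{x_3}w\,\partial_{x_j}\varphi$ contribution by parts in $x'$ (using $C_{3j}\in W^{1,\infty}$). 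By density of $\mathcal{S}(\mathbb{R})$ in $L^2(\mathbb{R})$ and of $C_0^\infty(\omega)$, and by recombining the two $C_{3j}$-terms into $\mdiv_{x'}$ of $(C_{31},C_{32})^{T}$ times $\widehat w$ (again using the $W^{1,\infty}$ regularity), this is precisely the distributional formulation of \eqref{eqdiv} with the stated $P$ and $q$.

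For the last sentence, when $w\in H^2(\Omega)$ one has $\widehat w(\cdot,\xi)\in H^2(\omega)$ for a.e.\ $\xi$ and $\nabla_{x'}\widehat w$, $\mdiv_{x'}(\widetilde C\nabla_{x'}\widehat w)$ are genuine $L^2$ functions, so the distributional equation \eqref{eqdiv} is in fact an equality of $L^2(\Omega)$-functions, hence holds for a.e.\ $(x',\xi)$. I expect the main obstacle to be purely bookkeeping: justifying the repeated use of Fubini and of the density arguments at the right level of regularity (so that all the integrations by parts in $x'$ are legitimate and the boundary terms on $\partial\omega$ vanish because $\varphi$ has compact support in $\omega$), and tracking the factors of $i\xi$ and the signs carefully so that the coefficients match $P$ and $q$ exactly — there is no deep difficulty, only the risk of a sign or a factor-of-two error in the mixed terms.
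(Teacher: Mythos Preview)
Your proposal is correct and follows essentially the same route as the paper: product test functions $v=\varphi\otimes\widehat\psi$ with $\varphi\in C_0^\infty(\omega)$, $\psi\in\mathcal S(\mathbb R)$, then Plancherel/self-adjointness of $\mathcal F_{x_3}$ together with Lemma~\ref{lemma2} to convert each $\partial_{x_3}$ into a factor $i\xi$, an integration by parts in $x'$ on the mixed $C_{k3}$-terms (using $C\in W^{1,\infty}$), and finally the density of $C_0^\infty(\omega)\otimes C_0^\infty(\mathbb R)$ in $C_0^\infty(\Omega)$ to pass to \eqref{eqdiv}; the $H^2$ upgrade is handled exactly as you say.
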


\begin{proof}
Choose $v=\varphi \otimes \widehat{\psi}$ in \eqref{34}, with $\varphi \in C_0^\infty (\omega )$ and $\psi \in \mathcal{S}(\mathbb{R})$, so we have:
\begin{equation}\label{35}
\sum_{k,l=1,2,3}\int_\Omega C_{kl}(x')\partial_{x_k}w(x',x_3)\partial _{x_l}(\varphi \otimes \widehat{\psi})(x',x_3)dx'dx_3=0.
\end{equation}
For $1 \leq k, l \leq 2$, we notice that
\begin{align}
\int_\Omega C_{kl}(x')\partial_{x_l}w(x',x_3)\partial _{x_k}(\varphi \otimes \widehat{\psi})(x',x_3)dx'dx_3
& =  \int_\omega C_{kl}(x')\partial _{x_k}\varphi (x') \left( \int_\mathbb{R}\partial_{x_l} w(x',x_3)\widehat{\psi}(x_3) dx_3 \right) dx' \nonumber \\
& =   \int_\Omega C_{kl}(x')\partial_{x_l}\widehat{w}(x',\xi )\partial _{x_k}(\varphi \otimes \psi)(x',\xi )dx'd\xi, \label{36}
\end{align}
directly from Lemma \ref{lemma2}. Further, the identity
\begin{align*}
\int_\Omega C_{3l}(x')\partial_{x_l}w(x',x_3)\partial _{x_3}(\varphi \otimes \widehat{\psi})(x',x_3)dx'dx_3 &=\int_\omega C_{3j}(x')\varphi (x') \left( \int_\mathbb{R}\partial_{x_l}w(x',x_3)\widehat{\psi}'(x_3)dx_3 \right) dx'
\\ 
&=\int_\omega C_{3l}(x')\varphi (x') \left( \int_\mathbb{R}\partial_{x_l}w(x',x_3)\widehat{(-i\xi )\psi}(x_3)dx_3 \right) dx',
\end{align*}
holds for $l=1,2$, so we have
\begin{equation}\label{37}
\int_\Omega C_{3l}(x')\partial_{x_l}w(x',x_3)\partial _{x_3}(\varphi \otimes \widehat{\psi})(x',x_3)dx'dx_3=\int_\Omega C_{3l}(x')(-i\xi )\partial_{x_l}\widehat{w}(x',\xi )(\varphi \otimes \psi)(x',\xi )dx'd\xi .
\end{equation}
Next, since
\begin{align*}
\int_\Omega C_{k3}(x')\partial_{x_3}w(x',x_3)\partial _{x_k}(\varphi \otimes \widehat{\psi})(x',x_3)dx'dx_3
& =  \int_\omega C_{k3}(x')\partial_{x_k}\varphi (x') \left( \int_\mathbb{R}\partial_{x_3}w(x',x_3)\widehat{\psi}(x_3)dx_3 \right) dx' \\
& =  - \int_\omega C_{k3}(x')\partial_{x_k}\varphi (x') \left( \int_\mathbb{R}w(x',x_3)\widehat{\psi}'(x_3)dx_3 \right)
dx',
\end{align*}
for each $k=1,2$, with
\[
\int_\mathbb{R}w(x',x_3)\widehat{\psi}'(x_3)dx_3
=-\int_\mathbb{R}w(x',x_3)\widehat{(i \xi) \psi}(x_3)dx_3 = -\int_\mathbb{R}  (i \xi) \widehat{w}(x',\xi) \psi(\xi)d \xi,
\]
then
\[
\int_\Omega C_{k3}(x')\partial_{x_3}w(x',x_3)\partial _{x_k}(\varphi \otimes \widehat{\psi})(x',x_3)dx'dx_3=\int_\Omega C_{k3}(x')(i\xi )\widehat{w}(x',\xi )\partial_{x_k}(\varphi \otimes \psi)(x',\xi )dx'd\xi .
\]
Integrating by parts in the right hand side of the above identity we get
\begin{align}\label{38}
\int_\Omega C_{k3}(x')\partial_{x_3}w(x',x_3)\partial _{x_k}(\varphi \otimes \widehat{\psi})(x',x_3)dx'dx_3&=\int_\Omega C_{k3}(x')(-i\xi )\partial_{x_k}\widehat{w}(x',\xi )(\varphi \otimes \psi)(x',\xi )dx'd\xi \nonumber
\\
&+\int_\Omega \partial_{x_k}C_{k3}(x')(-i\xi )\widehat{w}(x',\xi )(\varphi \otimes \psi)(x',\xi )dx'd\xi .
\end{align}
Further, bearing in mind that $\partial _{x_3}(\varphi \otimes \widehat{\psi})=\varphi \otimes \widehat{(-i \xi) \psi}$ and noticing that
\[
%& & \int_{\mathbb{R}} \partial_{x_3}w(x',x_3) \widehat{\psi}'(x_3) dx_3 = 
\int_{\mathbb{R}} \partial_{x_3}w(x',x_3) \widehat{(-i \xi)\psi}(x_3) dx_3
= - \int_{\mathbb{R}} w(x',x_3) \widehat{(-i \xi)^2\psi}(x_3) dx_3
= - \int_{\mathbb{R}} \widehat{w}(x',\xi) (-i \xi)^2 \psi(\xi) d \xi,
\]
we find out that
\begin{equation}\label{39}
\int_\Omega C_{33}(x')\partial_{x_3}w(x',x_3)\partial _{x_3}(\varphi \otimes \widehat{\psi})(x',x_3)dx'dx_3=\int_\Omega C_{33}(x')(-i\xi )^2\widehat{w}(x',\xi )(\varphi \otimes \psi)(x',\xi )dx'd\xi.
\end{equation}
Finally, putting \eqref{35}-\eqref{39} together, we end up getting that 
\[
\langle -\mdiv_{x'} (\widetilde{C}(x')\nabla _{x'}\widehat{w})+P(x',\xi )\cdot \nabla _{x'}\widehat{w}+q(x',\xi )\widehat{w},\Phi \rangle = 0,\;\; \Phi \in C_0^\infty (\omega )\otimes C_0^\infty (\mathbb{R}),
\]
where $\langle \cdot ,\cdot \rangle$ denotes the duality pairing between $C_0^\infty (\Omega )$ and $\mathcal{D}'(\Omega )$. From this and the density of $C_0^\infty (\omega )\otimes C_0^\infty (\mathbb{R})$ in $C_0^\infty (\Omega )$ then follows that
\[
-\mdiv_{x'} (\widetilde{C}(x')\nabla _{x'}\widehat{w})+P(x',\xi )\cdot \nabla _{x'}\widehat{w}+q(x',\xi )\widehat{w}=0\;\; \textrm{in}\; \mathcal{D}'(\Omega ),
\]
which completes the proof.
\end{proof}

Let $a \in \mathbb{R}$ be fixed.  We assume in the remaining of this section that $\alpha (x_3)=a$ for all $x_3 \in \mathbb{R}$. For notational simplicity we write $A_a(x')$ instead of $A(x',\alpha (x_3))$. With the help of Lemma \ref{lemma3} we will first re-express \eqref{2} in the Fourier plane. 

For $g\in H^1(\mathbb{R})\cap L^1(\mathbb{R})$ such that $\int_\mathbb{R}g(x_3)dx_3=1$ and for
$h \in H^{1 \slash 2}(\partial \omega)$, we consider the $H^1(\Omega)$-solution $u$ to \eqref{2}, with
$$ f(x',x_3)=  g(x_3) h(x'),\ x' \in \partial \omega,\ x_3 \in \mathbb{R}. $$
%By applying $\mathcal{F}_{x_3}$ to both side of the two identities in \eqref{2}, we check out through straightforward %computations that $\widehat{u}\in L^2(\mathbb{R};H^1(\omega ))$ is solution to the system
Since $u$ is solution to \eqref{34} with $C=A_a \in W^{1,\infty}(\omega)^{3 \times 3}$, we deduce from Lemma \ref{lemma3} that
$\widehat{u}\in L^2(\mathbb{R};H^1(\omega ))$ is solution to the system
\begin{equation}\label{41}
\left\{
\begin{array}{ll}
-\mdiv_{x'}\big(\widetilde{A}_a(x')\nabla_{x'} \widehat{u}(x',\xi )\big)-2ia\xi x'{^\bot} \cdot \nabla _{x'}\widehat{u}+\xi ^2\widehat{u}=0\;\; &\textrm{in}\;\; \mathcal{D}'(\Omega ), 
\\
\widehat{u}(\cdot ,\xi)= \widehat{g}(\xi )f &\textrm{on}\;\; \partial \omega ,\; \textrm{for all}\; \xi \in \mathbb{R},
\end{array}
\right.
\end{equation}
where
\[
x'{^\bot}=(-x_2,x_1)\ {\rm and}\
\widetilde{A}_a(x')=
 \begin{pmatrix}
1+x_2^2a^2 & -x_2x_1a^2 
\\
-x_2x_1a^2 & 1+x_1^2a^2 
 \end{pmatrix}.
\]
We turn now to examining \eqref{41}.\\

\noindent {\bf Analysis of the variational problem associated with \eqref{41}.}
Let us consider the bilinear form
\begin{align*}
\mathcal{A}_\xi[(v,w),(\varphi ,\psi)]&= \int_\omega \widetilde{A}_a\nabla v\cdot \nabla \varphi dx' -2a\xi \int_\omega x'{^\bot}\cdot \nabla w \varphi dx'+\xi ^2\int_\omega v\varphi dx'
\\
&\; +\int_\omega \widetilde{A}_a\nabla w\cdot \nabla \psi dx' +2a\xi \int_\omega x'{^\bot}\cdot \nabla v \psi dx'+\xi ^2\int_\omega w\psi dx',\ (v,w),(\varphi ,\psi) \in \mathcal{H},
\end{align*}
defined on the Hilbert space $\mathcal{H}=H_0^1(\omega )\times H_0^1(\omega )$ endowed with the norm
$$ \|(v,w)\|_{\mathcal{H}}=\big(\| \nabla v\|_{L^2(\omega )}^2+\| \nabla w\|_{L^2(\omega )}^2\big)^{1/2}. $$
Taking into account that
\[
\widetilde{A}_a(x')\zeta \cdot \zeta \geq |\zeta |^2,\; \textrm{for all}\; \zeta \in \mathbb{R}^2\; \textrm{and}\; x'\in \omega,
\]
and that
$$2a|\xi |\int_\omega \Big|x'{^\bot} \cdot \nabla v w\Big|dx'\leq a^2\delta ^2\int_\omega |\nabla v|^2 dx'+\xi ^2\int_\omega w^2dx',\ (v,w) \in \mathcal{H}, $$ 
%\begin{align*}
%&2a|\xi |\int_\omega \Big|x'{^\bot} \cdot \nabla w v\Big|dx'\leq a^2\delta ^2\int_\omega |\nabla w|^2+\xi %^2\int_\omega v^2dx'
%\\
%&2a|\xi |\int_\omega \Big|x'{^\bot} \cdot \nabla v w\Big|dx'\leq a^2\delta ^2\int_\omega |\nabla v|^2+\xi %^2\int_\omega w^2dx',
%\end{align*}
where $\delta=\max_{x' \in \omega} |x'| < \infty$, it is easy to see that
\begin{equation}\label{42}
\mathcal{A}_\xi[(v,w),(v,w)]\geq (1-a^2\delta ^2)\|(v,w)\|_{\mathcal{H}}^2.
\end{equation}
Let us fix $a_0>0$ so small that $\alpha=1-a_0^2\delta ^2>0$. In light of the above estimate, the bilinear form $\mathcal{A}_\xi$ is $\alpha$-elliptic for every $\xi \in \mathbb{R}$, provided we have $|a| \leq a_0$.
%\begin{equation}\label{42}
%\mathcal{A}_\xi[(v,w),(v,w)] \geq \alpha \|(v,w)\|_{\mathcal{H}}^2\;\; \textrm{for all}\;\; |a| \leq a_0\ %\textrm{and}\ \xi \in \mathbb{R}.
%\end{equation}
For each $\Phi \in C(\mathbb{R}; \mathcal{H}')$ and every $\xi \in \mathbb{R}$, there is thus a 
unique $(v(\xi ), w(\xi ))\in \mathcal{H}$ satisfying
\begin{equation}\label{43}
\mathcal{A}_\xi[(v(\xi),w(\xi)),(\varphi ,\psi )]=\langle \Phi (\xi ), (\varphi ,\psi)\rangle \;\; \textrm{for all}\;\; (\varphi ,\psi)\in \mathcal{H},
\end{equation}
by Lax-Milgram's lemma. From this then follows that
\begin{eqnarray}
& & \mathcal{A}_{\xi +\eta}[(v(\xi +\eta )-v(\xi ) ,w(\xi +\eta )-w(\xi )),(\varphi ,\psi )] \nonumber \\
&=& \mathcal{A}_\xi [(v(\xi ) ,w(\xi )),(\varphi ,\psi )]-\mathcal{A}_{\xi +\eta}[(v(\xi  ),w(\xi )),(\varphi ,\psi )]
+ \langle\Phi (\xi +\eta )-\Phi (\xi ), (\varphi,\psi) \rangle,
\label{44}
\end{eqnarray}
for each $\xi, \eta \in \mathbb{R}$ and $(\varphi ,\psi)\in \mathcal{H}$.
Further, by noticing through elementary computations that
\[
\mathcal{A}_{\xi+\eta}[(v,w),(\varphi ,\psi )]
=\mathcal{A}_{\xi+\eta}[(v,w),(\varphi ,\psi )]  - 2 a \eta \int_{\omega} x'^{\perp} \cdot ( \varphi \nabla_{x'} w - \psi \nabla_{x'} v) dx'+ \eta (2 \xi +\eta) \int_{\omega} (v \varphi + w \psi) dx', 
\]
for every $(v,w), (\varphi,\psi) \in \mathcal{H}$, we deduce from \eqref{44} and
Poincar\'e's inequality that there exists a constant $C=C(\xi ,\omega ,a_0)>0$ satisfying
\begin{eqnarray}
& & \mathcal{A}_\xi [(v(\xi ) ,w(\xi )),(v(\xi +\eta )-v(\xi ),w(\xi +\eta )-w(\xi ))]  \nonumber \\
& - & \mathcal{A}_{\xi +\eta}[(v(\xi  ),w(\xi )),(v(\xi +\eta )-v(\xi ) ,w(\xi +\eta )-w(\xi ))] \nonumber \\
& \leq & C |\eta | \|(v(\xi ) ,w(\xi ))\|_{\mathcal{H}} \|(v(\xi +\eta )-v(\xi ),w(\xi +\eta )-w(\xi ))\|_{\mathcal{H}},
\label{45}
\end{eqnarray}
for all $\xi \in \mathbb{R}$ and $\eta \in [-1,1]$.
In light of \eqref{42}, \eqref{44} written with $(\varphi ,\psi )=(v(\xi +\eta )-v(\xi ),w(\xi +\eta )-w(\xi ))$ and \eqref{45},
we thus find out that
\[
\alpha \|(v(\xi +\eta )-v(\xi ),w(\xi +\eta )-w(\xi ))\|_{\mathcal{H}}\leq C|\eta | \|(v(\xi ) ,w(\xi ))\|_{\mathcal{H}}+\|\Phi (\xi +\eta )-\Phi (\xi )\|_{\mathcal{H}'}.
\]
This proves that $(v,w)\in C(\mathbb{R};\mathcal{H})$. Moreover, we obtain
\begin{equation}\label{46}
\|(v(\xi ),w(\xi )) \|_{\mathcal{H}}\leq (1/\alpha )\|\Phi (\xi )\|_{\mathcal{H}'},\;\; \xi \in \mathbb{R},
\end{equation}
directly from \eqref{42}-\eqref{43}.
Further, it is easy to check for $\Phi \in C^1(\mathbb{R};\mathcal{H}')$ that $(v'(\xi ),w'(\xi )) \in C(\mathbb{R},\mathcal{H})$ is the solution to the variational problem
\[
\mathcal{A}_{\xi}[((v'(\xi ),w'(\xi )),(\varphi ,\psi)]=\langle \Phi_0(\xi ),(\varphi ,\psi)\rangle +\langle \Phi '(\xi ),(\varphi ,\psi)\rangle \;\; \textrm{for all}\;\; 
(\varphi ,\psi)\in \mathcal{H},
\]
where
\[
\langle \Phi_0(\xi ),(\varphi ,\psi)\rangle =2a \int_\omega x'{^\bot}\cdot ( \varphi \nabla_{x'} w(\xi )  - \psi \nabla_{x'} v(\xi) ) dx'-2 \xi \int_\omega (v(\xi )\varphi +  w(\xi )\psi) dx' .
\]
Using \eqref{46} and noting $\langle \xi \rangle=(1+|\xi |^2)^{1/2}$, we deduce from the above estimate that
\[
\| (v'(\xi ),w'(\xi ))\|_{\mathcal{H}}\leq C\big( \langle \xi \rangle \|\Phi (\xi )\|_{\mathcal{H}'}+\|\Phi '(\xi )\|_{\mathcal{H}'}\big),\;\; \xi \in \mathbb{R},
\]
for some constant $C=C(a_0,\omega )>0$. Similarly, if $\Phi \in C^2(\mathbb{R};\mathcal{H}')$ then the same reasoning shows that $(v,w)\in C^2(\mathbb{R},\mathcal{H})$ verifies
\[
\| (v''(\xi ),w''(\xi ))\|_{\mathcal{H}}\leq C\big(\langle \xi \rangle ^2\|\Phi (\xi )\|_{\mathcal{H}'}+ \langle \xi \rangle \|\Phi '(\xi )\|_{\mathcal{H}'}+\|\Phi ''(\xi )\|_{\mathcal{H}'}\big),\;\; \xi \in \mathbb{R}.
\]
Summing up we obtain the:
\begin{proposition}\label{proposition2}
Let $a_0 \in (0,(\max_{x' \in \omega} |x'|)^{-1 \slash 2})$, pick $a \in [-a_0,a_0]$ and assume that $\alpha(x_3)=a$ for all $x_3 \in \mathbb{R}$. Then for every $\Phi \in H^2(\mathbb{R},\mathcal{H}')$ such that $\langle \xi \rangle ^{2-j} \Phi^{(j)} \in L^2(\mathbb{R},\mathcal{H}')$, $j=0,1$, the variational problem \eqref{43} admits a unique solution $(v,w)\in H^2(\mathbb{R};\mathcal{H})$ satisfying
\begin{equation}\label{47}
\| (v,w)\|_{H^2(\mathbb{R};\mathcal{H})}\leq C\Big(\sum_{j=0}^2\| \langle \xi \rangle ^{2-j}\Phi ^{(j)}\|_{L^2(\mathbb{R};\mathcal{H}')}\Big),
\end{equation}
for some constant $C=C(\omega ,a_0)>0$. 
The above assumptions on $\Phi$ are actually satisfied whenever $\Phi =\widehat{\Psi}$ for some $\Psi \in H^2(\mathbb{R}; \mathcal{H}')$ such that $x_3\Psi \in H^1(\mathbb{R}; \mathcal{H}')$ and $x_3^2\Psi \in L^2(\mathbb{R}; \mathcal{H}')$. Moreover, the estimate \eqref{47} reads
\begin{equation}\label{48}
\| (v,w)\|_{H^2(\mathbb{R};\mathcal{H})}\leq C\Big(\sum_{j=0}^2\| x_3^{j}\Psi\|_{H^{2-j}(\mathbb{R};\mathcal{H}')}\Big),
\end{equation}
in this case.
\end{proposition}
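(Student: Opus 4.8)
The plan is to deduce \eqref{47} by squaring and integrating over $\xi\in\mathbb R$ the three pointwise-in-$\xi$ bounds on $\|(v(\xi),w(\xi))\|_{\mathcal H}$, $\|(v'(\xi),w'(\xi))\|_{\mathcal H}$ and $\|(v''(\xi),w''(\xi))\|_{\mathcal H}$ established just above the statement, then to upgrade the resulting estimate from smooth data to the full weighted class by density, and finally to read off \eqref{48} from Plancherel's theorem. Note first that the hypotheses on $\Phi$ amount exactly to saying that $\Phi$ lies in the weighted space $X=\{\Phi\in L^2(\mathbb R;\mathcal H');\ \langle\xi\rangle^2\Phi,\ \langle\xi\rangle\Phi',\ \Phi''\in L^2(\mathbb R;\mathcal H')\}$: indeed $\Phi\in H^2$ already gives $\Phi''\in L^2$, so each of the three terms in the right-hand side of \eqref{47} is finite.

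I would first treat $\Phi\in C_0^\infty(\mathbb R;\mathcal H')$. For such data the reasoning preceding the statement shows that $\xi\mapsto(v(\xi),w(\xi))$ lies in $C^2(\mathbb R;\mathcal H)$, with $(v',w')$ and $(v'',w'')$ solving the auxiliary variational problems written above and obeying the three pointwise estimates recalled there; moreover, for $\xi$ outside $\operatorname{supp}\Phi$ the problem \eqref{43} reduces to $\mathcal A_\xi[(v,w),\cdot]=0$, so the $\alpha$-ellipticity \eqref{42} forces $(v(\xi),w(\xi))=0$, i.e. $(v,w)$ has compact support and in particular belongs to $H^2(\mathbb R;\mathcal H)$. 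Squaring the three pointwise bounds, using $(a+b+c)^2\le 3(a^2+b^2+c^2)$ and integrating over $\mathbb R$ (legitimate by the compact support) then yields \eqref{47} for every $\Phi\in C_0^\infty(\mathbb R;\mathcal H')$, with $C=C(\omega,a_0)$.

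To pass to a general $\Phi\in X$ I would invoke density of $C_0^\infty(\mathbb R;\mathcal H')$ in $X$ (truncation by a dilated cutoff $\chi(\cdot/n)$ — the error terms produced when a derivative hits $\chi(\cdot/n)$ are controlled because $\langle\xi\rangle/n$ stays bounded on $\operatorname{supp}\chi'(\cdot/n)$ while $\Phi,\Phi'$ are small there — followed by mollification in $\xi$). Given $\Phi\in X$ and $\Phi_n\to\Phi$ in $X$, linearity of \eqref{43} together with \eqref{47} applied to $\Phi_n-\Phi_m$ makes $(v_n,w_n)$ a Cauchy sequence in $H^2(\mathbb R;\mathcal H)$; its limit $(v,w)$ satisfies \eqref{47}, and the pointwise bound \eqref{46} applied to $(v_n,w_n)-(v_m,w_m)$ shows that $(v_n,w_n)(\xi)\to(v,w)(\xi)$ in $\mathcal H$ for a.e.\ $\xi$, so passing to the limit in the variational identity identifies $(v,w)(\xi)$ as the Lax--Milgram solution of \eqref{43} for a.e.\ $\xi$. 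Uniqueness is immediate from \eqref{46} applied to the difference of two solutions.

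For the last assertion I would use that $\mathcal F_{x_3}$ is, up to a fixed normalisation constant absorbed in $C$, unitary on $L^2(\mathbb R;\mathcal H')$, that it turns multiplication by $\langle\xi\rangle^m$ into the $H^m(\mathbb R;\mathcal H')$ norm, and that $\partial_\xi\widehat\Psi=\widehat{(-ix_3)\Psi}$, whence $\Phi^{(j)}=\widehat{(-ix_3)^j\Psi}$ and $\|\langle\xi\rangle^{2-j}\Phi^{(j)}\|_{L^2(\mathbb R;\mathcal H')}=\|x_3^j\Psi\|_{H^{2-j}(\mathbb R;\mathcal H')}$ for $j=0,1,2$. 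In particular $\Psi\in H^2$, $x_3\Psi\in H^1$ and $x_3^2\Psi\in L^2$ translate precisely into $\Phi\in X$, so the previous part applies, and inserting these Plancherel identities into \eqref{47} gives \eqref{48}. The main obstacle is the density step: one must check that $C_0^\infty(\mathbb R;\mathcal H')$ is genuinely dense in the weighted space $X$ and, more delicately, that the abstract $H^2$-limit of the approximating solutions really is the variational solution of \eqref{43} for the limiting $\Phi$ — which is exactly where the pointwise bound \eqref{46} is indispensable to pass to the limit pointwise in $\xi$. A minor nuisance is keeping track of the Fourier normalisation constants in $\partial_\xi\widehat\Psi=\widehat{(-ix_3)\Psi}$ and in Plancherel, which contribute only harmless factors to $C$.
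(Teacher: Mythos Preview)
Your proposal is correct and follows essentially the same route as the paper: the paper derives the three pointwise-in-$\xi$ bounds on $(v,w)$, $(v',w')$, $(v'',w'')$ for $\Phi\in C^2(\mathbb R;\mathcal H')$ and then simply writes ``Summing up we obtain the:'' before stating the proposition, leaving both the integration in $\xi$ and the passage from $C^2$ to $H^2$ data implicit. You make these two steps explicit --- restricting first to $\Phi\in C_0^\infty(\mathbb R;\mathcal H')$ (which conveniently gives compact support of the solution), then running a truncation--mollification density argument in the weighted space $X$ and using \eqref{46} to identify the limit --- and you handle the Plancherel translation to \eqref{48} exactly as intended; this is an elaboration of the paper's argument rather than a different one.
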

Armed with Proposition \ref{proposition2} we may now tackle the analysis of the solution to \eqref{41}.\\

\noindent {\bf Some useful properties of the solution to \eqref{41}.} Pick $F\in H^1(\omega )$ such that $F=f$ on $\partial \omega$ and $\|F\|_{H^1(\omega )}\leq C(\omega )\|f\|_{H^{1/2}(\partial \omega )}$. Let $\widetilde{u}^r$ (resp., $\widetilde{u}^i$) denote the real (resp., imaginary) part of $\widetilde{u}=\widehat{u}-\widehat{g}(\xi )F=\widetilde{u}^r+i\widetilde{u}^i$. Since the Fourier transform $\widehat{u}$ of the $H^1(\Omega)$-solution $u$ to \eqref{2} is actually solution to \eqref{41}, we get by
direct calculation that $(\widetilde{u}^r,\widetilde{u}^i)$ is solution to the variational problem \eqref{43}, with
\begin{equation}
\label{48b}
\langle \Phi (\xi ), (\varphi ,\psi )\rangle =-\mathcal{A}_\xi [(\widehat{g}^rF,\widehat{g}^iF), (\varphi ,\psi )],
\end{equation}
and where $\widehat{g}^r$ (resp., $\widehat{g}^i$) stands for the real (resp., imaginary) part of $\widehat{g}$.
In light of \eqref{48b} we check out using elementary computations that
\begin{eqnarray}
\|\Phi (\xi )\|_{\mathcal{H}'} & \leq  & C\langle \xi \rangle ^2|\widehat{g}(\xi )|\|f\|_{H^{1/2}(\partial \omega )}
\label{48c1} \\
\|\Phi '(\xi )\|_{\mathcal{H}'} & \leq & C\big(\langle \xi \rangle ^2|\widehat{g}'(\xi )|+\langle \xi \rangle |\widehat{g}(\xi )|\big)|\|f\|_{H^{1/2}(\partial \omega )}
\label{48c2} \\
\|\Phi ''(\xi )\|_{\mathcal{H}'} & \leq & C\big(\langle \xi \rangle ^2|\widehat{g}''(\xi )|+\langle \xi \rangle |\widehat{g}'(\xi )|+|\widehat{g}(\xi )|\big)\|f\|_{H^{1/2}(\partial \omega )}, \label{48c3}
\end{eqnarray}
for some constant $C=C(a_0,\omega )>0$. Therefore we have $\langle \xi \rangle^j \Phi^{(2-j)} \in L^2(\mathbb{R})$ for $j=0,1,2$, provided $\langle \xi \rangle^{4-j} \widehat{g}^{j} \in L^2(\mathbb{R})$, which is actually the case if $x_3^j g \in H^{4-j}(\mathbb{R})$. From this and Proposition~\ref{proposition2} then follows the:
\begin{corollary}\label{corollary1}
Let $a$ and $\alpha$ be the same as in Proposition \ref{proposition2} and
assume that $g\in H^4(\mathbb{R})$ verifies $x_3g\in H^3(\mathbb{R})$, $x_3^2g\in H^2(\mathbb{R})$ and $\int_\mathbb{R}g(x_3)dx_3=1$. Then we have $\widehat{u}\in H^2(\mathbb{R};H^1(\omega ))$, $u\in L^1(\mathbb{R};H^1(\omega ))$ and $U=\widehat{u}(\cdot ,0)=\int_\mathbb{R}u(\cdot ,x_3)dx_3\in H^1(\omega )$ is the variational solution to the BVP
\[
\left\{
\begin{array}{ll}
\mdiv_{x'} \big(\widetilde{A}_a \nabla_{x'} U\big)=0\;\; &\textrm{in}\; \omega  
\\
U= f &\textrm{on}\; \partial \omega.
\end{array}
\right.
\]
\end{corollary}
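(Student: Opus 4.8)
The plan is to prove the three assertions in turn, the engine being Proposition~\ref{proposition2} applied to the variational problem \eqref{43} solved by $(\widetilde{u}^r,\widetilde{u}^i)$. First I establish that $\widehat{u}\in H^2(\mathbb R;H^1(\omega))$. Recall $\widehat u=\widetilde u+\widehat g(\xi)F=(\widetilde u^r+i\widetilde u^i)+\widehat g(\xi)F$, where $(\widetilde u^r,\widetilde u^i)$ solves \eqref{43} with $\Phi$ given by \eqref{48b}. The standing hypotheses $g\in H^4(\mathbb R)$, $x_3g\in H^3(\mathbb R)$, $x_3^2g\in H^2(\mathbb R)$ say precisely that $x_3^jg\in H^{4-j}(\mathbb R)$ for $j=0,1,2$, hence $\langle\xi\rangle^{4-j}\widehat g^{(j)}\in L^2(\mathbb R)$ for $j=0,1,2$; plugging this into \eqref{48c1}--\eqref{48c3} yields $\langle\xi\rangle^{2-j}\Phi^{(j)}\in L^2(\mathbb R;\mathcal H')$ for $j=0,1$ together with $\Phi''\in L^2(\mathbb R;\mathcal H')$, which are exactly the hypotheses of Proposition~\ref{proposition2}. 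That proposition then gives $(\widetilde u^r,\widetilde u^i)\in H^2(\mathbb R;\mathcal H)$ with the bound \eqref{47}, while $\widehat g\in H^2(\mathbb R)$ (since $g,x_3g,x_3^2g\in L^2(\mathbb R)$) and $F\in H^1(\omega)$ is fixed, so also $\xi\mapsto\widehat g(\xi)F\in H^2(\mathbb R;H^1(\omega))$. Summing, $\widehat u\in H^2(\mathbb R;H^1(\omega))$.

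Next I treat $u\in L^1(\mathbb R;H^1(\omega))$ and the value at $\xi=0$. Applying Plancherel's theorem in $x_3$ with values in the Hilbert space $H^1(\omega)$, the identities $\widehat{(-ix_3)^ku}=\partial_\xi^k\widehat u$, $k=0,1,2$, turn $\widehat u\in H^2(\mathbb R;H^1(\omega))$ into $\langle x_3\rangle^2u\in L^2(\mathbb R;H^1(\omega))$. Since $\langle x_3\rangle^{-2}\in L^2(\mathbb R)$, the Cauchy--Schwarz inequality gives
\[
\|u\|_{L^1(\mathbb R;H^1(\omega))}\leq\|\langle x_3\rangle^{-2}\|_{L^2(\mathbb R)}\,\|\langle x_3\rangle^2u\|_{L^2(\mathbb R;H^1(\omega))}<\infty,
\]
so $u\in L^1(\mathbb R;H^1(\omega))$. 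Being $H^1(\omega)$-valued and integrable, $u$ has a Fourier transform at $\xi=0$ equal to the Bochner integral $\widehat u(\cdot,0)=\int_{\mathbb R}u(\cdot,x_3)\,dx_3\in H^1(\omega)$, and this coincides with the value of the continuous representative of $\widehat u$ furnished by the embedding $H^1(\mathbb R;H^1(\omega))\hookrightarrow C(\mathbb R;H^1(\omega))$. In particular $U:=\widehat u(\cdot,0)$ is a well-defined element of $H^1(\omega)$.

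It remains to identify $U$ as the announced variational solution. By construction $(\widetilde u^r(\xi),\widetilde u^i(\xi))$ satisfies \eqref{43} for a.e.\ $\xi\in\mathbb R$; but both sides of \eqref{43} are continuous in $\xi$ — the bilinear form $\mathcal A_\xi$ depends polynomially on $\xi$, $(\widetilde u^r,\widetilde u^i)\in C(\mathbb R;\mathcal H)$ by Proposition~\ref{proposition2}, and $\xi\mapsto\Phi(\xi)$ is continuous by the explicit formula \eqref{48b} together with the continuity of $\widehat g$ — hence \eqref{43} holds for every $\xi$, in particular for $\xi=0$. At $\xi=0$ all the $\xi$-dependent terms of $\mathcal A_\xi$ vanish, and the normalization $\widehat g(0)=\int_{\mathbb R}g\,dx_3=1$ gives $\widehat g^r(0)=1$, $\widehat g^i(0)=0$, so \eqref{43} at $\xi=0$ splits into
\[
\int_\omega\widetilde A_a\nabla\widetilde u^i(0)\cdot\nabla\psi\,dx'=0,\qquad
\int_\omega\widetilde A_a\nabla\bigl(\widetilde u^r(0)+F\bigr)\cdot\nabla\varphi\,dx'=0,\qquad\varphi,\psi\in H_0^1(\omega).
\]
The coercivity $\widetilde A_a(x')\zeta\cdot\zeta\geq|\zeta|^2$ forces $\widetilde u^i(0)=0$, whence $U=\widehat u(\cdot,0)=\widetilde u^r(0)+\widehat g(0)F=\widetilde u^r(0)+F$; the second identity then says exactly that $U$ is the (unique, by Lax--Milgram) variational solution of $\mdiv_{x'}(\widetilde A_a\nabla_{x'}U)=0$ in $\omega$ with $U=F_{|\partial\omega}=f$ on $\partial\omega$.

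The one genuinely nonroutine point is the passage from "\eqref{43} holds for a.e.\ $\xi$" to "\eqref{43} holds at the single point $\xi=0$": this is precisely what the $H^2(\mathbb R;\mathcal H)\hookrightarrow C(\mathbb R;\mathcal H)$ regularity supplied by Proposition~\ref{proposition2} buys, and it must be paired with the observation that the $L^1$-Fourier transform value $\int_{\mathbb R}u(\cdot,x_3)\,dx_3$ and the continuous-representative value of the $H^2$ function $\widehat u$ at $\xi=0$ coincide. Everything else — the decay/moment bookkeeping on $g$, the extraction of $\langle x_3\rangle^2u\in L^2$, and the evaluation of $\mathcal A_0$ — is routine.
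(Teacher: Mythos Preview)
Your proof is correct and follows the same route as the paper: verify via \eqref{48c1}--\eqref{48c3} that the moment hypotheses on $g$ place $\Phi$ in the scope of Proposition~\ref{proposition2}, and then read off the claims. The paper leaves the passage from $(\widetilde u^r,\widetilde u^i)\in H^2(\mathbb R;\mathcal H)$ to $u\in L^1(\mathbb R;H^1(\omega))$ and the evaluation at $\xi=0$ implicit; your Cauchy--Schwarz/Plancherel argument and the continuity-in-$\xi$ justification for specializing \eqref{43} to $\xi=0$ make these steps explicit and are exactly what is needed.
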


In view of \eqref{41} and \eqref{43}, we deduce from \eqref{48c1}-\eqref{48c3} that
\begin{align*}
&\| \mdiv_{x'}\big(\widetilde{A}_a \nabla _{x'}\widehat{u}(\cdot ,\xi )\big) \|_{L^2(\omega)}\leq C\langle \xi \rangle ^4|\widehat{g}(\xi )|\|f\|_{H^{1/2}(\partial \omega )}
\\
&\|\partial _\xi\mdiv_{x'}\big(\widetilde{A}_a \nabla _{x'}\widehat{u}(\cdot ,\xi )\big) \|_{L^2(\omega)}\leq C\big(\langle \xi \rangle ^5|\widehat{g}'(\xi )|+\langle \xi \rangle ^4|\widehat{g}(\xi )|\big)|\|f\|_{H^{1/2}(\partial \omega )}
\\
&\|\partial _\xi ^2\mdiv_{x'}\big(\widetilde{A}_a \nabla_{x'} \widehat{u}(\cdot ,\xi )\big)\|_{L^2(\omega)}\leq C\big(\langle \xi \rangle ^6|\widehat{g}(\xi )|+\langle \xi \rangle ^5|\widehat{g}'(\xi )|+\langle \xi \rangle ^4|\widehat{g}''(\xi )|\big)\|f\|_{H^{1/2}(\partial \omega )},
\end{align*}
for some positive constant $C$ depending only on $a_0$ and $\omega$. This combined with Corollary \ref{corollary1} yields the:

\begin{proposition}\label{proposition3}
Let $a$ and $\alpha$ be as in Proposition \ref{proposition2}. Let $g\in H^6(\mathbb{R})$ verify $x_3g\in H^5(\mathbb{R})$, $x_3^2g\in H^4(\mathbb{R})$ and $\int_\mathbb{R}g(x_3)dx_3=1$. Then we have $\widetilde{A}_a \nabla _{x'} \widehat{u} \cdot \nu (x')\in H^2(\mathbb{R};H^{-1/2}(\partial \omega))$ and thus $A_a\nabla u\cdot \nu (x)\in L^1(\mathbb{R};H^{-1/2}(\partial \omega))$, with 
\[\widetilde{A}_a  \nabla_{x'} U\cdot \nu (x')=\widetilde{A}_a \nabla_{x'} \widehat{u}(\cdot ,0)\cdot \nu (x')=\int_\mathbb{R}A_a\nabla u(\cdot ,x_3)\cdot \nu (x') d x_3 \in H^{-1/2}(\partial \omega ).\]
\end{proposition}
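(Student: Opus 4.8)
The plan is to reduce the statement to a two–dimensional normal–trace theorem on $\omega$, in the spirit of Proposition \ref{proposition1}, fed with the $\xi$–regularity of $\widehat{u}$ already obtained. First I would record that, mimicking the proof of Proposition \ref{proposition1} in the bounded Lipschitz domain $\omega$ (Green's formula together with the density of $C^\infty(\overline{\omega})^2$ in $H(\mdiv_{\widetilde{A}_a},\omega)=\{P\in L^2(\omega)^2;\ \mdiv_{x'}(\widetilde{A}_aP)\in L^2(\omega)\}$, using the uniform ellipticity of $\widetilde{A}_a$ stated just before \eqref{42}), the assignment $v\mapsto \widetilde{A}_a\nabla_{x'}v\cdot\nu$ extends to a bounded linear operator
\[
\mathcal{T}:\Big\{v\in H^1(\omega);\ \mdiv_{x'}(\widetilde{A}_a\nabla_{x'}v)\in L^2(\omega)\Big\}\longrightarrow H^{-1/2}(\partial\omega),
\]
obeying $\|\mathcal{T}v\|_{H^{-1/2}(\partial\omega)}\leq C\big(\|v\|_{H^1(\omega)}+\|\mdiv_{x'}(\widetilde{A}_a\nabla_{x'}v)\|_{L^2(\omega)}\big)$.

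Next I would collect the regularity in the Fourier variable and apply $\mathcal{T}$ pointwise in $\xi$. Since the present hypotheses on $g$ are stronger than those of Corollary \ref{corollary1}, we have $\widehat{u}\in H^2(\mathbb{R};H^1(\omega))$, so $\partial_\xi^k\widehat{u}\in L^2(\mathbb{R};H^1(\omega))$ for $k=0,1,2$; moreover the three estimates displayed just before the statement, combined with $g\in H^6(\mathbb{R})$, $x_3g\in H^5(\mathbb{R})$, $x_3^2g\in H^4(\mathbb{R})$ (which place $\langle\xi\rangle^6\widehat{g}$, $\langle\xi\rangle^5\widehat{g}'$, $\langle\xi\rangle^4\widehat{g}''$ in $L^2(\mathbb{R})$), give $\mdiv_{x'}(\widetilde{A}_a\nabla_{x'}\widehat{u})\in H^2(\mathbb{R};L^2(\omega))$. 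Because $\widetilde{A}_a$ does not depend on $\xi$, one has $\partial_\xi^k\,\mdiv_{x'}(\widetilde{A}_a\nabla_{x'}\widehat{u})=\mdiv_{x'}(\widetilde{A}_a\nabla_{x'}\partial_\xi^k\widehat{u})$, so $\partial_\xi^k\widehat{u}(\cdot,\xi)$ lies in the domain of $\mathcal{T}$ for a.e. $\xi$; by linearity and continuity of $\mathcal{T}$, $\partial_\xi^k\big(\mathcal{T}\widehat{u}(\cdot,\xi)\big)=\widetilde{A}_a\nabla_{x'}\partial_\xi^k\widehat{u}(\cdot,\xi)\cdot\nu$, and squaring the trace estimate, integrating in $\xi$ and summing over $k=0,1,2$ produces a finite bound. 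Hence $\widetilde{A}_a\nabla_{x'}\widehat{u}\cdot\nu\in H^2(\mathbb{R};H^{-1/2}(\partial\omega))$.

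It remains to identify the trace at $\xi=0$ and to transfer the conclusion to $A_a\nabla u\cdot\nu$. By the continuous embedding $H^2(\mathbb{R};X)\hookrightarrow C(\mathbb{R};X)$ with $X=H^{-1/2}(\partial\omega)$ the value at $\xi=0$ is well defined, and since the normal trace acts only in $x'$ it commutes with $\mathcal{F}_{x_3}$; together with $\widehat{u}(\cdot,0)=U$ from Corollary \ref{corollary1} this shows that this value equals $\widetilde{A}_a\nabla_{x'}U\cdot\nu$. For the full conormal derivative, note that on $\partial\Omega=\partial\omega\times\mathbb{R}$ the outward unit normal is $(\nu(x'),0)$, so inspection of $A_a$ yields $A_a\nabla u\cdot\nu=\widetilde{A}_a\nabla_{x'}u\cdot\nu+a\,(x'{^\bot}\cdot\nu)\,\partial_{x_3}u$, whence $\widehat{A_a\nabla u\cdot\nu}=\widetilde{A}_a\nabla_{x'}\widehat{u}\cdot\nu+ia\xi\,(x'{^\bot}\cdot\nu)\,\widehat{u}_{|\partial\omega}$; the added term is again in $H^2(\mathbb{R};H^{-1/2}(\partial\omega))$ since $\|\widehat{u}(\cdot,\xi)\|_{H^1(\omega)}\leq C\langle\xi\rangle^2|\widehat{g}(\xi)|\,\|f\|_{H^{1/2}(\partial\omega)}$ by \eqref{46}, \eqref{48c1} and Poincar\'e's inequality (with analogous bounds for its $\xi$–derivatives), and $\widehat{g}$ decays as above. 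Arguing as in the passage from $\widehat{u}\in H^2(\mathbb{R};H^1(\omega))$ to $u\in L^1(\mathbb{R};H^1(\omega))$ in Corollary \ref{corollary1}, we conclude $A_a\nabla u\cdot\nu\in L^1(\mathbb{R};H^{-1/2}(\partial\omega))$, and since the added term vanishes at $\xi=0$ (equivalently $\int_{\mathbb{R}}\partial_{x_3}u\,dx_3=0$) its integral in $x_3$ equals $\widetilde{A}_a\nabla_{x'}U\cdot\nu$. I expect the main obstacle to be bookkeeping rather than a hard estimate: justifying cleanly that the $H(\mdiv)$ normal–trace operator $\mathcal{T}$ commutes with $\partial_\xi$ and with $\mathcal{F}_{x_3}$, and handling the extra $\partial_{x_3}u$ term in the normal derivative on $\partial\Omega$ so that it carries enough $\xi$–decay for the $L^1$ conclusion while disappearing after integration in $x_3$.
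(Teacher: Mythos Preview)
Your proposal is correct and follows essentially the same route as the paper: the paper's argument is just the three displayed estimates on $\partial_\xi^k\,\mdiv_{x'}(\widetilde{A}_a\nabla_{x'}\widehat{u})$ ``combined with Corollary~\ref{corollary1}'', and what you have written is precisely the unpacking of that combination via the $H(\mdiv)$ normal-trace on $\omega$. You go beyond the paper in explicitly isolating the extra term $a(x'^\bot\cdot\nu)\partial_{x_3}u$ in $A_a\nabla u\cdot\nu$ and checking that it carries enough $\xi$-decay and vanishes after integration in $x_3$; the paper leaves this implicit.
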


In light of Proposition~\ref{proposition3} it is natural to define the two following DN maps:
\begin{align*}
&\Lambda _a : f\in H^{1/2}(\partial \omega )\mapsto \left[ x_3\mapsto A_a\nabla u(\cdot ,x_3)\cdot \nu (\cdot ,x_3)\right] \in L^1(\mathbb{R};H^{-1/2}(\partial \omega ))
\\
&\widetilde{\Lambda}_a:f\in H^{1/2}(\partial \omega )\mapsto \widetilde{A}_a \nabla_{x'}U\cdot \nu (x')\in H^{-1/2}(\partial \omega ).
\end{align*}
These two operators are bounded, and they satisfy the estimate
\begin{equation}\label{49}
\|\widetilde{\Lambda}_1-\widetilde{\Lambda}_2\|_{\mathscr{L}(H^{1/2}(\partial \omega ),H^{-1/2}(\partial \omega ))}\leq \|\Lambda _1-\Lambda _2\|_{\mathscr{L}\big(H^{1/2}(\partial \omega ),L^1(\mathbb{R};H^{-1/2}(\partial \omega ))\big)},
\end{equation}
where, for simplicity, we write $\Lambda _j$ (resp., $\widetilde{\Lambda}_j$) for $\Lambda _{a_j}$ (resp., $\widetilde{\Lambda}_{a_j}$), $j=1,2$.

Finally, since the matrix $\partial_a \widetilde{A}(x',a)$ has two eigenvalues $\lambda _0=0$ and $\lambda _1=|x'|^2$, we derive the following result by mimicking the proof of \cite[Claim, page 169]{AG1}.

\begin{theorem}\label{theorem2}
Let $a_0$ be the same as in Proposition~\ref{proposition2}, let $a_j \in \mathbb{R}$, $j=1,2$, and assume that $\alpha_j(x_3)=a_j$ for all $x_3 \in \mathbb{R}$.  Then there exists a constant $C>0$, depending only on $a_0$ and $\omega$, such that the following stability estimate
\[
|a_1-a_2|\leq C \|\Lambda _1-\Lambda _2\|_{\mathscr{L}\big(H^{1/2}(\partial \omega ),L^1(\mathbb{R};H^{-1/2}(\partial \omega ))\big)},
\]
holds true whenever $|a_1|,\ |a_2|\leq a_0$.
\end{theorem}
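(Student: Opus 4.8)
The plan is to reduce Theorem~\ref{theorem2} to the two–dimensional anisotropic conductivity problem governed by the matrix $\widetilde{A}_a$ on the bounded domain $\omega$, and then to invoke the stability machinery of Alessandrini and Gaburro. First I would record the reduction already established in the text: by Corollary~\ref{corollary1} and Proposition~\ref{proposition3}, for a suitable Dirichlet datum of the tensor form $f(x',x_3)=g(x_3)h(x')$ the partial Fourier transform of the $H^1(\Omega)$–solution $u$ of \eqref{2} satisfies \eqref{41}, and its value at $\xi=0$, namely $U=\widehat{u}(\cdot,0)=\int_\mathbb{R}u(\cdot,x_3)\,dx_3$, is the variational solution in $\omega$ of $\mdiv_{x'}(\widetilde{A}_a\nabla_{x'}U)=0$ with $U=h$ on $\partial\omega$. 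Consequently the two–dimensional DN map $\widetilde{\Lambda}_a\colon h\mapsto \widetilde{A}_a\nabla_{x'}U\cdot\nu$ is obtained from $\Lambda_a$ by integrating in $x_3$, so that the contraction estimate \eqref{49} holds.

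Next I would set up the polarization identity in the reduced problem: for $a_1,a_2\in[-a_0,a_0]$, writing $U_j$ for the solution associated with $\widetilde{A}_{a_j}$ and boundary datum $h_j$, Green's formula gives
\begin{equation*}
\langle(\widetilde{\Lambda}_1-\widetilde{\Lambda}_2)h_1,h_2\rangle=\int_\omega(\widetilde{A}_{a_1}-\widetilde{A}_{a_2})\nabla_{x'}U_1\cdot\nabla_{x'}U_2\,dx',
\end{equation*}
exactly as in \eqref{31}. The crucial structural fact, already isolated in the excerpt, is that $\widetilde{A}(x',a)$ depends on $a$ only through $a^2$, with $\partial_a\widetilde{A}(x',a)=2a\,\mathrm{diag}$–type matrix whose eigenvalues are $0$ and $|x'|^2$; hence on $\partial\omega$, where $|x'|$ is bounded below by a positive constant (after translating $\omega$ so that $0\in\omega$, or simply because $\omega$ is a bounded domain with $\overline\omega$ compact and $0$ may be assumed interior), one has the weak monotonicity $\partial_a\widetilde{A}(x',a)\ge 0$ with a strictly positive eigenvalue in the radial direction on a neighbourhood of $\partial\omega$. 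This is precisely the hypothesis \cite{AG1}[Formula (5.7)] under which the CLAIM on page~169 of \cite{AG1} yields, via singular solutions concentrating near a boundary point $x^0\in\gamma$ where $|a_1^2-a_2^2|$, equivalently $|a_1-a_2|$ (using $|a_1|,|a_2|\le a_0$ so $|a_1+a_2|\le 2a_0$ and the lower bound is harmless), is detected, a Lipschitz bound
\begin{equation*}
|a_1-a_2|\le C\,\|\widetilde{\Lambda}_1-\widetilde{\Lambda}_2\|_{\mathscr{L}(H^{1/2}(\partial\omega),H^{-1/2}(\partial\omega))}.
\end{equation*}
Combining this with \eqref{49} gives the assertion with a constant $C=C(\omega,a_0)$.

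The steps I would carry out in order are therefore: (i) fix the normalization placing $0$ in the interior of $\omega$ so that $\min_{x'\in\partial\omega}|x'|>0$, and verify $\partial_a\widetilde{A}(x',a)\ge0$ together with the one–sided ellipticity of its restriction to the normal/radial direction near $\partial\omega$; (ii) write the polarization identity above and introduce the Green function (singular solution) of $\mdiv_{x'}(\widetilde{A}_{a_j}\nabla_{x'}\cdot)$ on an enlarged Lipschitz domain $\omega_\rho\supset\omega$, using its parametrix expansion as in \eqref{e1} to extract the leading singular term; (iii) run the Alessandrini–Gaburro blow‑up analysis of \cite{AG1} verbatim in dimension two — the monotone eigenvalue guarantees the analogue of \eqref{e3}, i.e.\ the leading term is bounded below by $C\,|a_1-a_2|$ times a power of the distance parameter, while the remainder and the right‑hand side are controlled by $\|\widetilde{\Lambda}_1-\widetilde{\Lambda}_2\|$; (iv) optimize in the distance parameter to obtain the Lipschitz estimate, and finally invoke \eqref{49}. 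The main obstacle is step (iii): one must check that the degeneracy of $\partial_a\widetilde{A}$ in the tangential direction (the zero eigenvalue $\lambda_0=0$) does not spoil the lower bound — this is exactly why the singular solutions are placed along the \emph{normal} direction $\tilde\nu$ to $\partial\omega$, so that the quadratic form $[\widetilde{A}_{a_1}^{-1}-\widetilde{A}_{a_2}^{-1}](x-z_\tau)\cdot(x-z_\tau)$ sees the nondegenerate eigenvalue $|x'|^2$; making this quantitative, uniformly for $x^0$ in the relevant portion of $\partial\omega$ and for $|a_1|,|a_2|\le a_0$, is the technical heart and is what the citation to \cite{AG1}[CLAIM, page 169] is meant to supply.
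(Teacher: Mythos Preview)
Your strategy coincides with the paper's: reduce via Corollary~\ref{corollary1}, Proposition~\ref{proposition3} and \eqref{49} to the two-dimensional DN map $\widetilde{\Lambda}_a$ associated with $\widetilde{A}_a$ on $\omega$, and then appeal to \cite{AG1}[CLAIM, p.~169] exploiting the weak monotonicity encoded in the eigenvalues $0$ and $|x'|^2$. The paper's own proof is literally the one sentence preceding the statement of Theorem~\ref{theorem2} together with \eqref{49}, so your steps (i)--(iv) are a faithful expansion of what is intended.

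One step in your write-up, however, does not go through as stated. The reduced conductivity $\widetilde{A}_a(x')$ depends on $a$ only through $a^2$, so $\widetilde{\Lambda}_a=\widetilde{\Lambda}_{-a}$ identically; the Alessandrini--Gaburro argument, run with the monotone parameter $t=a^2$, yields at best
\[
|a_1^2-a_2^2|\;\le\; C\,\|\widetilde{\Lambda}_1-\widetilde{\Lambda}_2\|.
\]
Passing from this to $|a_1-a_2|$ would require a \emph{lower} bound on $|a_1+a_2|$, whereas you invoke the upper bound $|a_1+a_2|\le 2a_0$, which goes the wrong way: for $a_1=-a_2\neq 0$ the right-hand side above vanishes while $|a_1-a_2|$ does not. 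The paper's one-line sketch does not address this point either, and its assertion that $\partial_a\widetilde{A}(x',a)$ has eigenvalues $0,\ |x'|^2$ drops the factor $2a$, thereby hiding the reversal of monotonicity for $a<0$. As written, the reduction to $\widetilde{\Lambda}_a$ only delivers the theorem under an additional sign constraint such as $a_1 a_2\ge 0$; to distinguish $a$ from $-a$ in general one must retain information from $\Lambda_a$ beyond its $\xi=0$ fibre, since the linear-in-$a$ entries of $A_a$ live in the third row and column and are lost in $\widetilde{A}_a$.
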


%=========================================================

\appendix

%=======================================================================

\section{Restriction to $\widetilde{H}^{3/2}(\partial \Omega )$}
In this subsection we exhibit sufficient conditions on $\omega$ and $\alpha$ ensuring that the restriction of $\Lambda_\alpha$ to $\widetilde{H}^{3/2}(\partial \Omega )$ is a bounded operator into $L^2(\mathbb{R}; H^{1/2}(\partial \omega ))$. We assume for this purpose that $\Omega_1= \omega \times (-1,1)$ has $H^2$-regularity property. That is,
for every $F\in L^2(\Omega )$ and any matrix-valued function $C=(C_{ij}(x))_{1 \leq i,j \leq 3}$ with coefficients in $W^{1,\infty}(\Omega _1)$
verifying the ellipticity condition 
\[
\exists \alpha >0,\ C(x)\xi \cdot \xi \geq \alpha |\xi |^2,\ \textrm{for all}\; \xi \in \mathbb{R}^3,\; x\in \Omega_1,
\]
the following BVP
 \[
\left\{
\begin{array}{ll}
\mdiv (C\nabla w)=F\;\; &\textrm{in}\;\; \Omega _1,
\\
w=0 &\textrm{on}\;\; \partial \Omega _1,
\end{array}
\right.
\]
has a unique solution $w\in H^2(\Omega _1)$ obeying
\[
\|w\|_{H^2(\Omega _1)}\leq C(\alpha , M)\|F\|_{L^2(\Omega _1)},
\]
for some constant $C(\alpha ,M)>0$ depending only on $\alpha$, $M=\max_{1 \leq i,j \leq 3}\|C_{ij}\|_{W^{1,\infty}(\Omega _1)}$ and $\omega$.

Notice that $\Omega_1$ has $H^2$-regularity property if and only if this is the case for 
$\Omega _a=\omega \times (-a,a)$ and some $a>0$. Moreover we recall from \cite{G} that $\Omega_1$ has $H^2$-regularity property provided $\omega$ is convex.

We turn now to establishing the following result, which is our main tool for the analysis of the restriction of $\Lambda_\alpha$ to $\tilde{H}^{3 \slash 2}(\partial \Omega )$.
\begin{theorem}\label{theorem1}
Assume that $\alpha \in C^{0,1}(\mathbb{R})$ and that $\Omega _1$ has $H^2$-regularity property.
% and the condition $\sqrt{2}\delta \|\theta '\|_\infty \leq 1/2$ is satisfied. 
Then for any $f\in \widetilde{H}^{3/2}(\partial \Omega )$, the BVP \eqref{2} admits a unique solution $u\in H^2(\Omega )$. Moreover there is a constant $C>0$, depending only on $\|\alpha \|_{C^{0,1}(\mathbb{R})}$ and $\omega$, such that we have:
\begin{equation}\label{17}
\|u\|_{H^2(\Omega )}\leq C \|f\|_{\widetilde{H}^{3/2}(\partial \Omega )}.
\end{equation}
\end{theorem}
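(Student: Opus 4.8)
The plan is to reduce the $H^2$-regularity estimate on the infinite cylinder $\Omega = \omega \times \mathbb{R}$ to the local $H^2$-regularity hypothesis on the finite cell $\Omega_1 = \omega \times (-1,1)$ by means of a partition of unity in the axial variable $x_3$. First I would reduce to the homogeneous boundary datum: given $f \in \widetilde{H}^{3/2}(\partial\Omega)$, pick by Lemma \ref{lemma1bis} an extension $F \in H^2(\Omega)$ with $F = f$ on $\partial\Omega$ and $\|F\|_{H^2(\Omega)} \leq 2\|f\|_{\widetilde{H}^{3/2}(\partial\Omega)}$, and set $v = u - F \in H_0^1(\Omega)$, which is the weak solution (from \eqref{25}) of $\mdiv(A\nabla v) = -\mdiv(A\nabla F) =: \Phi$ in $\Omega$, $v = 0$ on $\partial\Omega$, with $\Phi \in L^2(\Omega)$ and $\|\Phi\|_{L^2(\Omega)} \leq C\|F\|_{H^2(\Omega)}$ because $A = A_0(x',\alpha(x_3))$ has $W^{1,\infty}$ coefficients (here one uses $\alpha \in C^{0,1}(\mathbb{R})$ to bound $\partial_{x_3}$ of the coefficients). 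It then suffices to prove $\|v\|_{H^2(\Omega)} \leq C\|\Phi\|_{L^2(\Omega)}$ together with the already-known bound $\|v\|_{H^1(\Omega)} \leq C\|\Phi\|_{L^2(\Omega)}$, where $C$ depends only on $\|\alpha\|_{C^{0,1}(\mathbb{R})}$ and $\omega$.

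Next I would set up the axial partition of unity. Let $\{\chi_n\}_{n\in\mathbb{Z}}$ be a family of cutoffs in $C_0^\infty(\mathbb{R})$ with $\chi_n(x_3) = \chi_0(x_3 - n)$, $\operatorname{supp}\chi_0 \subset (-1,1)$, $\sum_n \chi_n^2 \equiv 1$ on $\mathbb{R}$ (a standard smooth square partition of unity translated by integers), and with $\|\chi_n\|_{W^{2,\infty}}$ bounded uniformly in $n$. For each $n$, the function $v_n = \chi_n v$ vanishes on $\partial\Omega$ and is supported in $\overline{\omega} \times [n-1, n+1]$; a direct computation gives
\begin{equation}\label{eq-plan-commutator}
\mdiv(A\nabla v_n) = \chi_n \Phi + 2 A\nabla v \cdot \nabla\chi_n + v\, \mdiv(A \nabla\chi_n) =: \Phi_n \quad \text{in } \omega\times(n-1,n+1),
\end{equation}
where the last two commutator terms involve only $\partial_{x_3}\chi_n$ and $\partial_{x_3}^2\chi_n$, hence are controlled in $L^2$ by $\|v\|_{H^1(\omega\times(n-1,n+1))}$. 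After the affine change of variable $x_3 \mapsto x_3 - n$, which maps $\omega\times(n-1,n+1)$ onto $\Omega_1$ and leaves the coefficients of $A$ in $W^{1,\infty}$ with norm bounded by a function of $\|\alpha\|_{C^{0,1}(\mathbb{R})}$ (by translation invariance of the Lipschitz norm of $\alpha$), the $H^2$-regularity hypothesis on $\Omega_1$ applies and yields
\begin{equation}\label{eq-plan-local}
\|v_n\|_{H^2(\omega\times(n-1,n+1))} \leq C \|\Phi_n\|_{L^2(\omega\times(n-1,n+1))} \leq C\big( \|\Phi\|_{L^2(\omega\times(n-1,n+1))} + \|v\|_{H^1(\omega\times(n-1,n+1))} \big),
\end{equation}
with $C$ independent of $n$.

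Finally I would reassemble: since $v = \sum_n \chi_n v_n$ locally (more precisely $v = \sum_n \chi_n^2 v$ and each point of $\Omega$ lies in the support of boundedly many $\chi_n$), one has $\|v\|_{H^2(\Omega)}^2 \leq C \sum_n \|v_n\|_{H^2(\omega\times(n-1,n+1))}^2$, and summing the squares of \eqref{eq-plan-local} over $n \in \mathbb{Z}$, using that each $x_3$ lies in at most two of the intervals $(n-1,n+1)$, gives $\|v\|_{H^2(\Omega)}^2 \leq C(\|\Phi\|_{L^2(\Omega)}^2 + \|v\|_{H^1(\Omega)}^2) \leq C\|\Phi\|_{L^2(\Omega)}^2$ after invoking the $H^1$ bound. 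Tracing back through $u = v + F$ and $\|F\|_{H^2(\Omega)} \le 2\|f\|_{\widetilde{H}^{3/2}(\partial\Omega)}$ yields \eqref{17}; uniqueness in $H^2(\Omega) \subset H^1(\Omega)$ is inherited from uniqueness of the $H^1$ weak solution already established. The main obstacle I anticipate is bookkeeping the uniformity of the constant in \eqref{eq-plan-local} in $n$: one must check carefully that the $W^{1,\infty}(\Omega_1)$-norm of the translated coefficients and the $L^\infty$-norms of the derivatives of $\chi_n$ do not grow with $n$, which is exactly where the hypothesis $\alpha \in C^{0,1}(\mathbb{R})$ (uniform, not merely local, Lipschitz bound) and the translation-invariant choice of the $\chi_n$ are used; the local elliptic estimate itself is black-boxed from the $H^2$-regularity assumption on $\Omega_1$.
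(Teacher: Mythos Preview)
Your argument is correct, and it follows a genuinely different route from the paper's own proof. The paper also reduces to the homogeneous problem for $u_0=u-F$ with right-hand side $\Psi=\mdiv(A\nabla F)$, but instead of a partition of unity it uses a single \emph{expanding} cutoff $\xi_n\in C_0^\infty(-(n+1),n+1)$ with $\xi_n=1$ on $[-n,n]$, derives the analogue of your commutator identity for $\xi_n u_0$ on $\Omega_{n+1}=\omega\times(-(n+1),n+1)$, and then \emph{rescales} $\Omega_{n+1}$ onto $\Omega_1$ via $x_3\mapsto x_3/(n+1)$ before invoking the $H^2$-regularity hypothesis; the resulting uniform bound $\|\xi_n u_0\|_{H^2(\Omega)}\leq C\|\Psi\|_{L^2(\Omega)}$ is then passed to the weak limit $u_0$ in $H^2(\Omega)$ by lower semicontinuity of the norm.

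Your translation-based partition of unity is the more standard localisation and is arguably cleaner: the shift $x_3\mapsto x_3-n$ preserves both the ellipticity constant of $A$ and the $W^{1,\infty}(\Omega_1)$-norm of its coefficients uniformly in $n$ (this is precisely where the global Lipschitz bound on $\alpha$ enters, as you note), so the constant in the local $H^2$ estimate is manifestly independent of $n$; the reassembly by $\ell^2$-summation over the finitely overlapping windows is then routine. The paper's rescaling, by contrast, conjugates $A$ by the anisotropic matrix $J_{n+1}=\mathrm{diag}(1,1,1/(n+1))$, so one must track how the ellipticity constant, the $W^{1,\infty}$ bound, and the various $L^2$ norms scale with $n$ in order to see that the final estimate is uniform; in exchange, the paper avoids the summation step and obtains the global bound via a single weak-limit argument. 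Both approaches ultimately rely on the same black-boxed local regularity on $\Omega_1$ and on the same commutator computation.
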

\begin{proof}
Since $f\in \widetilde{H}^{3/2}(\partial \Omega )$ we may choose $F\in H^2(\Omega )$ in accordance with Lemma~\ref{lemma1bis} so that $F=f$ on $\partial \Omega$ and
\begin{equation}\label{18}
\|F\|_{H^2(\Omega )} = \|f\|_{\widetilde{H}^{3/2}(\partial \Omega )}.
\end{equation}
We put $\Psi =\mdiv (A\nabla F)$. By the ellipticity condition \eqref{8} we have a unique $u_0\in H_0^1(\Omega )$ satisfying simultaneously
\begin{equation}\label{19}
\int_\Omega A\nabla u_0\cdot \nabla v dx=\int_\Omega \Psi v dx,\; \textrm{for all}\; v\in H_0^1(\Omega ),
\end{equation}
and
\begin{equation}\label{20}
\|u_0\|_{H^1(\Omega )}\leq C_0 \|\Psi \|_{L^2(\Omega )},
\end{equation}
for some constant $C_0>0$ depending on $\omega$ and $M=\|\alpha \|_{C^{0,1}(\mathbb{R})}$.

Further, there exists $\xi _n\in C_0^\infty (-(n+1),n+1)$ such that $\xi _n=1$ in $[-n,n]$, $\|\xi_n'\|_{\infty} \leq \kappa$ and $\|\xi_n'' \|_{\infty} \leq \kappa$ for all $n \geq 1$, by \cite[Theorem 1.4.1 and Eq. (1.4.2)]{Ho} and estimate (1.4.2) \cite[p. 25]{Ho}, where $\kappa$ is a constant which is independent of $n$. 

Thus, we get for any $v\in H_0^1(\Omega )$ and $n \geq 1$ that
%\[
%A \nabla (\xi_n u_0)\cdot \nabla v=A\nabla u_0\cdot \nabla (\xi_n v)-(A\nabla u_0 \cdot \nabla \xi _n)v+(A\nabla \xi %_n \cdot \nabla v) u_0,
%\]
%whence,
\[
\int_\Omega A\nabla (\xi _n u_0)\cdot \nabla v dx=\int_\Omega A\nabla u_0\cdot \nabla (\xi _nv) dx-\int_\Omega (A\nabla u_0\cdot \nabla \xi _n) v dx+\int_\Omega (A\nabla \xi _n \cdot \nabla v) u_0 dx,
\]
by direct calculation.
An integration by parts in the last term of this identity providing
\[
\int_\Omega (A\nabla \xi _n \cdot \nabla v) u_0 dx= -\int_\Omega (A\nabla \xi _n \cdot \nabla u_0) v dx-\int_\Omega \mdiv (A\nabla \xi_n) u_0 v dx,
\]
we find out that
\[
\int_\Omega A\nabla (\xi _n u_0)\cdot \nabla v dx=\int_\Omega A\nabla u_0\cdot \nabla (\xi _nv) dx -\int_\Omega ( A\nabla u_0\cdot \nabla \xi _n ) v dx -\int_\Omega (A\nabla \xi _n \cdot \nabla u_0) v dx-\int_\Omega \mdiv (A\nabla \xi _n)u_0 v dx.
\]
Since $A$ is symmetric, it follows from this and \eqref{19} that
\[
\int_\Omega A\nabla (\xi _n u_0)\cdot \nabla v dx=\int_\Omega \Psi \xi _nv dx-2\int_\Omega ( A\nabla \xi _n \cdot \nabla u_0) v dx-\int_\Omega \mdiv (A\nabla \xi _n)u_0 v dx,\ \textrm{for all}\ v \in H_0^1(\Omega ).
\]
Therefore, bearing in mind that $\Omega_a=\omega \times (-a,a)$ for any $a>0$, the $\xi_n u_0 \in H_0^1(\Omega _{n+1})$ is thus solution to the variational problem
\begin{equation}\label{21}
\int_{\Omega _{n+1}}A\nabla (\xi _nu_0)\cdot \nabla v dx=\int_{\Omega _{n+1}}\widetilde{\Psi}v dx,\; \textrm{for all}\; v\in H_0^1(\Omega _{n+1}),
\end{equation}
with 
\[
\widetilde{\Psi}= \Psi \xi _n-2A\nabla \xi _n \cdot \nabla u_0-\mdiv (A\nabla \xi _n)u_0.
\]
The next step of the proof is to make the change of variables $(x',x_3)\in \Omega _{n+1}\mapsto (x', y_3)=(x',1/(n+1)x_3) \in \Omega _1$ in \eqref{21}. Putting
$$ J_n= \left( \begin{array}{ccc} 1 & 0 & 0 \\ 0 & 1 & 0 \\ 0 & 0 & 1 \slash n \end{array} \right),\ n \geq 1, $$
and
\begin{align*}
%&J_n=\textrm{diag}(1,1,1/n),
%\\
&\underline{A}(x',y_3)=1/(n+1)J_{n+1}A(x', (n+1)y_3)J_{n+1},
\\
&\underline{\xi}(y_3)=\xi _n((n+1)y_3),
\\
& \underline{u}(x',y_3)=u_0(x',(n+1)y_3),
\\
& w_n(x',y_3)=\xi _n((n+1)y_3)u_0(x',(n+1)y_3),
\\
& \underline{\mdiv} (P(x',y_3))= \partial _{x_1}P_1(x',y_3)+\partial _{x_2}P_2(x',y_3)+1/(n+1)\partial _{y_3}P_3(x',y_3),
\\
& \underline{\Psi}(x',y_3)=1/(n+1)\Big[\Psi (x',(n+1)y_3)-2J_{n+1}A(x', (n+1)y_3)J_{n+1}\nabla \underline{\xi}(y_3)\cdot \nabla \underline{u}(x',y_3)
\\
& \hskip 3.5cm -\underline{\mdiv} \big( A(x', (n+1)y_3) J_{n+1} \nabla \underline{\xi}(y_3) \big)\underline{u}(x',y_3)\Big],
\end{align*}
for $(x',y_3) \in \Omega_1$, we find out by direct computations that $w_n \in H_0^1(\Omega _1)$ is solution to
\[
\int_{\Omega _1}\underline{A}\nabla w_n\cdot \nabla v dx=\int_{\Omega _1}\underline{\Psi}v dx,\; \textrm{for all}\; v\in H_0^1(\Omega _1).
\]
Since $\|\underline{\Psi}\|_{L^2(\Omega _1)}\leq (n+1)^{-3/2}C(\omega,M )\|\Psi \|_{L^2(\Omega )}$ by \eqref{20}, where $C=C(\omega,M)$ denotes some generic positive constant depending only on $\omega$ and $M$, it holds true that
$\|w_n\|_{H^2(\Omega _1)}\leq (n+1)^{-3/2}C(\omega, M )\|\Psi \|_{L^2(\Omega )}$.
Here we used the estimate $\|w_n\|_{H^2(\Omega _1)}\leq C(\omega, M )\|\underline{\Psi}\|_{L^2(\Omega _1)}$, arising from the $H^2$-regularity property imposed on $\Omega_1$.
As a consequence we have
 \begin{equation}\label{23}
\|\xi _nu_0\|_{H^2(\Omega )}\leq C(\omega,M )\|\Psi \|_{L^2(\Omega )}.
\end{equation}
Therefore, upon eventually extracting a subsequence of $(\xi _n u_0)_n$, we may assume that it converges weakly to $\widetilde{u}$ in $H^2(\Omega )$. On the other hand $(\xi _n u_0)_n$ converges to $u_0$ in $L^2(\Omega )$. Thus by the uniqueness of the limit, we have $u_0=\widetilde{u}\in H^2(\Omega )$ so $(\xi _nu_0)_n$ converges weakly to $u_0$ in $H^2(\Omega )$. Further, the norm $\|\cdot \|_{H^2(\Omega )}$ being lower semi-continuous, we have 
\begin{equation}
\label{23b}
\|u_0\|_{H^2(\Omega )}\leq \liminf_n \|\xi _nu_0\|_{H^2(\Omega )}\leq C(\omega,M )\|\Psi \|_{L^2(\Omega )},
\end{equation}
by \eqref{23}.
Bearing in mind that $\|\Psi \|_{L^2(\Omega )}\leq C(\omega,M )\|F\|_{H^2(\Omega )}$, \eqref{18} and \eqref{23b} then yield
\[
\|u_0\|_{H^2(\Omega )}\leq C(\omega, M )\|f\|_{\widetilde{H}^{3/2}(\partial \Omega )}.
\]
Now the desired result follows from this by taking into account that $u=u_0+F\in H^2(\Omega )$ is the unique solution to \eqref{2}.
\end{proof}

For all $\Omega$ and $\alpha$ fulfilling the assumptions of Theorem~\ref{theorem1}, the mapping
\[
\Lambda _\alpha : f\in \widetilde{H}^{3/2}(\partial \Omega ) \mapsto \partial _\nu u\in L^2(\mathbb{R}; H^{1/2}(\partial \omega )),
\]
where $u$ denotes the unique $H^2(\Omega)$-solution to \eqref{2}, is well defined by Theorem~\ref{theorem1}.
Further, since $C_0^\infty (\mathbb{R};H^2(\omega ))$ is dense in $H^2(\Omega )$, then the trace operator
\[
\widetilde{\tau} : w\in H^2(\Omega )\mapsto \partial _\nu w\in L^2(\mathbb{R}; H^{1/2}(\partial \omega )),
\]
is easily seen to be bounded. From this and
\eqref{17} then follows that $\|\Lambda _\alpha \|\leq C$ as a linear bounded operator from
$\widetilde{H}^{3/2}(\partial \Omega )$ into $L^2(\mathbb{R}; H^{1/2}(\partial \omega ))$, where the constant $C>0$ depends only on $\omega$ and $\| \alpha \|_{C^{0,1}(\mathbb{R})}$.

%=============================================

\section{Linking $\Lambda_\alpha$ to the DN map associated with $\Omega_{\theta}$}
\label{sec-DNO}
In this subsection we define the DN map $\widetilde{\Lambda}_\theta$ associated with the BVP \eqref{1}, which is stated on the twisted domain $\Omega_\theta$, and establish the link between
$\widetilde{\Lambda}_\theta$ and $\Lambda_\alpha$, where we recall that $\alpha=\theta'$. It turns out that $\widetilde{\Lambda}_\theta$ is not physically relevant\footnote{Since $\widetilde{\Lambda}_\theta$ is defined from the Neumann observation on $\partial \Omega_\theta$ of the solution to \eqref{1} then the variable twisting angle $\theta \in C^1(\mathbb{R})$ should necessarily be known everywhere.} for the analysis of the inverse problem under consideration in this text, but since the BVP \eqref{2} was derived from \eqref{1}, it is plainly natural to link the operator $\Lambda_\alpha$ we used in the preceding sections to $\widetilde{\Lambda}_\theta$.

We start by defining the trace space for functions in $H^1(\Omega _\theta )$. We set for all $L>0$,
\[
\Omega _\theta ^L=\{ (R_{\theta (x_3)}x',x_3);\; x'=(x_1,x_2)\in \omega ,\; x_3\in (-L,L)\} = \{ x \in \Omega_\theta,\ | x_3 | < L \}
\]
and
\[
\Gamma _\theta ^L=\{ (R_{\theta (x_3)}x',x_3);\; x'=(x_1,x_2)\in \partial \omega ,\; x_3\in [-L,L]\} = \{ x \in \partial \Omega_\theta,\ | x_3 | < L \}.
\]
For every $u\in H^1(\Omega _\theta )$ we have $u_{|\Omega _\theta ^L} \in H^1(\Omega _\theta ^L)$ hence $u_{|\partial \Omega _\theta ^L}\in H^{1/2}(\partial \Omega _\theta ^L)$. Thus, putting
\[
H^{1/2}(\Gamma _\theta ^L)=\{ h=g_{|\Gamma _\theta ^L}\ \textrm{in}\ L^2(\Gamma _\theta ^L);\ g\in H^{1/2}(\partial \Omega _\theta ^L) \},
\]
it holds true that $u_{| \Gamma_\theta^L} \in H^{1/2}(\Gamma _\theta ^L)$. Here the space $H^{1/2}(\Gamma _\theta ^L)$ is equipped with its natural quotient norm 
$$\|h\|_{H^{1/2}(\Gamma _\theta ^L)}=\inf \{ \|g\|_{H^{1/2}(\partial \Omega _\theta ^L)};\; g_{|\Gamma _\theta ^L}=h \}. $$ 
Further we define
\[
H_{\textrm{loc}}^{1/2}(\partial \Omega _\theta )=\{h\in L^2_{\textrm{loc}}(\partial \Omega _\theta );\;\; h_{|\Gamma _\theta ^L}\in H^{1/2}(\Gamma _\theta ^L)\; \textrm{for all}\ L>0 \},
\]
and then introduce the subspace
$\widetilde{H}^{1/2}(\partial \Omega _\theta )=\{ h\in H_{\textrm{loc}}^{1/2}(\partial \Omega _\theta );\; \textrm{there exists}\; v\in H^1(\Omega _\theta )\; \textrm{such that}\; v_{|\partial \Omega _\theta}=h\}$ of $H_{\textrm{loc}}^{1/2}(\partial \Omega _\theta )$.
Here and henceforth $v_{|\partial \Omega _\theta}=h$ means that the identity $v_{|\Gamma _\theta ^L}=h_{|\Gamma _\theta ^L}$ holds in the trace sense for every $L>0$.
It is not hard to see that $\widetilde{H}^{1/2}(\partial \Omega _\theta )$ is a Banach space for the quotient norm:
\[
\| h\|_{\widetilde{H}^{1/2}(\partial \Omega _\theta )}=\inf\{\|v\|_{H^1(\Omega _\theta )};\; v_{|\partial \Omega _\theta}=h\}.
\]

We introduce the mapping
\begin{align*}
I_\theta : C_0^1(\partial \Omega _\theta )&\longrightarrow C_0^1(\partial \Omega )
\\
g &\mapsto  f=g\circ \varphi _\theta ,
\end{align*}
where, for the sake of shortness, we note
$\varphi _\theta (x)=T_{\theta (x_3)}(x',x_3)$ for $x \in \overline{\Omega}$.
Pick $g$ in $C_0^1(\partial \Omega_\theta )$ and choose $v\in C_0^1(\mathbb{R}^3)$ such that $v_{|\partial \Omega _\theta}=g$. Since $v_{| \Omega_{\theta}} \in H^1(\Omega_{\theta})$ we get that $g \in \tilde{H}^{1 \slash 2}(\partial \Omega_\theta)$. Moreover for all $v \in H^1(\Omega_{\theta})$ obeying $v_{|\partial \Omega _\theta}=g$ the function
$u=v_{|\Omega _\theta}\circ \varphi _\theta$ belongs to $H^1(\Omega)$ by \cite[Proposition~9.6]{Bre}, and
\[
\|I_\theta g\|_{\widetilde{H}^{1/2}(\partial \Omega )}\leq \|u\|_{H^1(\Omega )}\leq C(\omega ,\theta )\| v\|_{H^1(\Omega _\theta )}.
\]
As a consequence we have
\begin{equation}\label{50}
\|I_\theta g\|_{\widetilde{H}^{1/2}(\partial \Omega )}\leq C(\omega ,\theta )\|g\|_{\widetilde{H}^{1/2}(\partial \Omega _\theta )}\;\; \textrm{for any}\; g\in C_0^1(\partial \Omega _\theta ).
\end{equation}

Let us now consider $g\in \widetilde{H}^{1/2}(\partial \Omega _\theta )$ and $v\in H^1(\Omega _\theta )$ such that $v_{|\partial \Omega _\theta}=g$. For any sequence $(v_n)_n \in C_0^1(\mathbb{R}^3)$ such that $v_n{_{|\Omega _\theta}} \mapsto v$ in $H^1(\Omega _\theta)$ as $n \rightarrow +\infty$, it is clear that
\[
\| g-g_n\|_{\widetilde{H}^{1/2}(\partial \Omega _\theta )}\leq \|v-v_n\|_{H^1(\Omega _\theta )},
\]
provided $g_n={v_n}_{|\partial \Omega _\theta}$. Hence $(g_n)_n$ converges to $g$ in $\widetilde{H}^{1/2}(\partial \Omega _\theta )$.

For all $n \geq 1$, put $f_n=I_\theta g_n=g_n\circ \varphi _\theta$ and $u_n=v_n\circ \varphi _\theta$. Since $f_n=u_n{_{|\partial \Omega}}$, we see that
\[
\|f_n-f_m\|_{\widetilde{H}^{1/2}(\partial \Omega )}\leq \|u_n-u_m\|_{H^1(\Omega )}\leq C(\omega ,\theta )\|v_n-v_m\|_{H^1(\Omega _\theta)}.
\]
Therefore $(f_n)_n$ is a Cauchy sequence in $\widetilde{H}^{1/2}(\partial \Omega )$ and $f=\lim_n f_n \in \widetilde{H}^{1/2}(\partial \Omega )$. Set $f=I_\theta g$. Then, in view of \eqref{50}, $I_\theta$ extends to a bounded operator, still denoted by $I_\theta$, from $\widetilde{H}^{1/2}(\partial \Omega _\theta )$ into $\widetilde{H}^{1/2}(\partial \Omega )$.

\smallskip
Arguing as above, we thus find out that the mapping
\begin{align*}
J_\theta : C_0^1(\partial \Omega  )&\longrightarrow C_0^1(\partial \Omega _\theta)
\\
f &\mapsto  g=f\circ \psi _\theta ,
\end{align*}
where $\psi_{\theta}=\varphi_{\theta}^{-1}$,
extends to a bounded operator, which is still called $J_\theta$, from $\widetilde{H}^{1/2}(\partial \Omega )$ into $\widetilde{H}^{1/2}(\partial \Omega _\theta )$.

\smallskip
Evidently, $I_\theta J_\theta f=f$ for all $f\in C_0^1(\partial \Omega  )$ and $J_\theta I_\theta g=g$ for all $g\in C_0^1(\partial \Omega _\theta)$. Therefore we have $J_\theta =I_\theta ^{-1}$ from the density of $C_0^1(\partial \Omega)$ (resp., $C_0^1(\partial \Omega _\theta)$) in $\tilde{H}^{1 \slash 2}(\partial \Omega)$ (resp., $\tilde{H}^{1 \slash 2}(\partial \Omega_\theta)$).

\smallskip
%In the sequel $\widetilde{H}^{-1/2}(\partial \Omega _\theta )$ will denote the dual of $\widetilde{H}^{1/2}(\partial %\Omega _\theta )$. 
Next, by reasoning in the same way as in the derivation of \eqref{2}, we prove with the help of the Lax-Milgram lemma that the BVP \eqref{1} has a unique solution $v\in H^1(\Omega _\theta )$ for every $g\in \widetilde{H}^{1/2}(\partial \Omega _\theta )$. Moreover the operator 
\[ \widetilde{\Lambda}_\theta : g\mapsto \partial _\nu v\] 
is well defined as a bounded operator from $\widetilde{H}^{1/2}(\partial \Omega _\theta )$ into its dual space $\widetilde{H}^{-1/2}(\partial \Omega _\theta )$. Similarly to $\Lambda _\theta$, it can be checked that $\widetilde{\Lambda}_\theta$ is characterized by the following identity
\[
\langle \widetilde{\Lambda}_\theta g,h\rangle =\int_{\Omega _\theta}\nabla v \cdot \nabla H dy,
\]
which holds true for all $h\in \widetilde{H}^{1/2}(\partial \Omega _\theta )$ and all $H\in H^1(\Omega _\theta )$ such that $H_{|\partial \Omega _\theta}=h$. By performing the change of variable $y=\varphi _\theta (x)$ in the last integral, we thus get that
\[
\langle \widetilde{\Lambda}_\theta g,h\rangle =\int_\Omega A\nabla u \cdot \nabla (H\circ \varphi _\theta ) dx,
\]
where $u$ denotes the solution to the BVP \eqref{2} with $f=I_\theta g$. Therefore we have
\[
\langle \widetilde{\Lambda}_\theta g,h\rangle =\langle \Lambda _{\theta '}I_\theta g, I_\theta h \rangle,
\]
which means that $\widetilde{\Lambda}_\theta= I_\theta ^* \Lambda _{\theta '} I_\theta$, or equivalently that $\Lambda _{\theta '}=J_\theta ^* \widetilde{\Lambda}_\theta J_\theta$, where 
$\Lambda_{\theta'}$ stands for $\Lambda_\alpha$.

%=========================================================

\small

\bigskip
%\vskip 1cm

\end{document}